\newcommand{\ZZ }{\mathbb{Z}}
\newcommand{\KK }{\mathbb{K}}
\newcommand{\PP }{\mathbb{P}}
\newcommand{\id }{\mathrm{id}}
\newcommand{\Ac }{\mathcal{A}}
\newcommand{\Bc }{\mathcal{B}}
\newcommand{\Uc }{\mathcal{U}}
\newcommand{\mf}{\mathfrak m}
\newcommand{\Cs }{\mathscr{C}}
\newcommand{\Ds }{\mathscr{D}}
\newcommand{\Es }{\mathscr{E}}
\newcommand{\Fs }{\mathscr{F}}
\newcommand{\Gs }{\mathscr{G}}
\newcommand{\Os }{\mathscr{O}}
\newcommand{\isom }{\simeq}
\DeclareMathOperator{\Spec}{Spec}
\DeclareMathOperator{\Proj}{Proj}
\newcommand{\Xf }{\mathfrak{X}}
\DeclareMathOperator{\Der}{Der}
\DeclareMathOperator{\Tor }{Tor}
\DeclareMathOperator{\im }{im}
\DeclareMathOperator{\Tot }{Tot}
\DeclareMathOperator{\Ab }{\textbf{Ab}}
\DeclareMathOperator{\Mod}{\textbf{Mod} }
\DeclareMathOperator{\Sh}{\textbf{Sh} }
\DeclareMathOperator{\Rder }{R}
\DeclareMathOperator{\pd}{pd}
\newcommand\ddxi[1]{\partial/\partial x_{#1}}
\newcommand\wt[1]{\widetilde{#1}}
\DeclareMathOperator{\op }{op}
\numberwithin{equation}{section}
\theoremstyle{plain}
\newtheorem{lemma}[equation]{Lemma}
\newtheorem{theorem}[equation]{Theorem}
\newtheorem{conjecture}[equation]{Conjecture}
\newtheorem{corollary}[equation]{Corollary}
\newtheorem{proposition}[equation]{Proposition}
\theoremstyle{definition}
\newtheorem{definition}[equation]{Definition}
\newtheorem{remark}[equation]{Remark}
\newtheorem{problem}[equation]{Problem}
\title[On Yuzvinsky's lattice cohomology]
{On Yuzvinsky's lattice sheaf cohomology for hyperplane arrangements}
\author[P.~M\"ucksch]{Paul M\"ucksch}
\address{Paul M\"ucksch,
	Max-Planck-Institut f\"ur Mathematik,
	D-53111 Bonn, Germany}
\email{muecksch@mpim-bonn.mpg.de}
\begin{document}

\begin{abstract}
	We establish the relationship between 
	the cohomology of a certain sheaf on the intersection lattice
	of a hyperplane arrangement introduced by Yuzvinsky and the
	cohomology of the coherent sheaf on punctured affine space, respectively 
	projective space associated 
	to the module of logarithmic vector fields along the arrangement.
	Our main result gives a K\"unneth formula connecting the cohomology theories,
	answering a question by Yoshinaga.
	This, in turn, provides a characterization of the projective dimension of the 
	module of logarithmic vector fields
	and yields a new proof of Yuzvinsky's freeness criterion.
	Furthermore, our approach affords a new formulation of
	Terao's freeness conjecture and a more general problem.
\end{abstract}


\keywords{Arrangements of hyperplanes, sheaves on posets, logarithmic vector fields,  free arrangements, sheaf cohomology}
\subjclass[2010]{Primary: 52C35, 14F06, 13C10} 

\maketitle


\section{Introduction}
	\label{sec:Intro}

Let $\Ac$ be a hyperplane arrangement, i.e.\ a finite set of codimension one subspaces in a $\KK$-vector
space $V$ of dimension $\ell \geq 2$ for some field $\KK$.
The combinatorial structure of $\Ac$ is encoded in its intersection lattice $L(\Ac)$ which consists of 
all intersections of subsets of hyperplanes ordered by reverse inclusion.
Let $S = \KK[x_1,\ldots,x_\ell]$ be the coordinate ring of the vector space $V$.
The arrangement $\Ac$ is called free if the associated graded $S$-module $D(\Ac)$ of logarithmic vector fields along $\Ac $ or module of $\Ac$-derivations 
is a free $S$-module, a notion first introduced and studied by Saito \cite{Saito80_LogForms} and Terao \cite{Terao1980_FreeI}
(see Section \ref{ssect:Recoll_Arr}).
One of the most intricate problems in the study of hyperplane arrangements is to relate properties of $D(\Ac)$
to the combinatorial structure of $\Ac$ given by its intersection lattice.
The ultimate solution is proposed by Terao's conjecture from the 1980s (see \cite[Conj.~4.138]{OrTer92_Arr}) which asserts that over a fixed field $\KK$ the freeness of $\Ac$ only depends on its intersection lattice $L(\Ac)$.
This conjecture still remains open.

\bigskip

Functors defined on the intersection lattice of a hyperplane arrangement 
and related to the derivation module were already studied by Solomon
and Terao \cite{SolTer81_FormulaCharPoly}. They gave a new proof of Terao's seminal Factorization Theorem for free arrangements first obtained in \cite{Terao1981_GeneralizedExp}.

Assume that $\Ac$ is central and essential, that is $\cap_{H \in \Ac}H = \{0\}$ and 
set $L_0 := ( L(\Ac) \setminus \{ \{0\} \} )^{\op}$, i.e.\ the order relation in $L_0$ is inclusion.
In a series of papers \cite{Yuz91_LatticeCohom}, \cite{Yuz93_FirstTwoObstr}, \cite{Yuz93_FreeLocFreeL}
Yuzvinsky studied the functor $\Ds:L_0 \to \Mod_S$, 
$(X \subseteq Y) \mapsto (\Ds(X) = D(\Ac_X) \hookrightarrow D(\Ac_Y) = \Ds(Y))$
regarded as a sheaf on
the finite topological space associated to the poset $L_0$ and its cohomology (see Sections \ref{sec:PosetSheaves} and \ref{sec:LASheaves}).
An arrangement $\Ac$ is called locally free if all localization subarrangements $\Ac_X$ which consist of
all hyperplanes from $\Ac$ containing $X$ are free for all $X \in L_0$.
He showed \cite[Thm.~1.1]{Yuz93_FirstTwoObstr} that a locally free hyperplane arrangement $\Ac$
is free if and only if the lattice sheaf cohomology groups $H^n(L_0,\Ds)$ vanish for all $0<n<\ell-1$.

Moreover, in his study of these lattice sheaf cohomology groups, Yuzvinsky showed 
that free arrangements form a Zariski open subset 
in the moduli space of arrangements with a fixed intersection lattice, \cite[Cor.~3.4]{Yuz93_FreeLocFreeL}. 
This is up to date still the strongest general result towards Terao's conjecture.

\bigskip

A classical theorem by Horrocks \cite{Hor64_VectorBundles} asserts that a vector bundle $\Es$ 
on projective space $\PP^{\ell-1} = \Proj S$ 
splits into a direct sum of line bundles if and only if the sheaf cohomology groups
$H^n(\PP^{\ell-1},\Es(d))$ vanish for all $0<n<\ell-1$ and all $d \in \ZZ$.

It turns out that the coherent sheaf $\wt{D}$ on $\PP^{\ell-1}$ associated to the derivation module $D = D(\Ac)$
of a locally free arrangement
is a vector bundle, cf.\ \cite[Thm.~2.3]{MS01_LogFormsLocallyFree}.
Applying Horrocks' criterion to $\wt{D}$ of a locally free hyperplane arrangement
yields a freeness criterion resembling Yuzvinsky's criterion, cf.\ \cite[Prop.~1.20]{Yos14_FreenessSurv}.
A related similarity with local cohomology was already noticed by Yuzvinsky in \cite[Rem.~2.7]{Yuz91_LatticeCohom}.

\bigskip

Our aim is to establish
the exact relationship between Yuzvinsky's lattice sheaf cohomology 
and the sheaf cohomology on projective space and explain the resemblance of
Yuzvinsky's and Horrocks' criteria for freeness.
This clarifies the resemblance with local cohomology already noted by Yuzvinsky \cite[Rem.~2.7]{Yuz91_LatticeCohom} 
and answers a question posed by Yoshinaga \cite[Prob.~1.49]{Yos14_FreenessSurv}.

\bigskip

Set $\Xf := \Spec S \setminus \{ \mf \}$ where $\mf = (x_1,\ldots,x_\ell)$ is the homogeneous maximal ideal
and let $\Os_\Xf = \wt{S}|_\Xf$ be the structure sheaf (the restriction
of the structure sheaf of the affine scheme $\Spec S$ to the open complement $\Xf$ of the origin).

Our principal theorem establishes the exact relationship of the cohomology of the sheaf $\Ds$ on $L_0$ studied by Yuzvinsky
with the cohomology of the coherent sheaf $\wt{D}|_\Xf$ on the punctured spectrum $\Xf$
associated to the derivation module.

\begin{theorem}
	\label{thm:IsomCohom}
	For all $n \neq \ell-1$ we have
	\[
		H^n(\Xf,\wt{D}|_\Xf) \isom \bigoplus_{i+j=n} H^i(L_0,\Ds) \otimes_S H^j(\Xf,\Os_\Xf)
	\]
	and for $n = \ell-1$ we have a short exact sequence
	\begin{center}
		\begin{tikzcd}[column sep=5mm]
			0 
			\ar[r] & \bigoplus_{i+j={\ell-1}} H^i(L_0,\Ds) \otimes_S H^j(\Xf,\Os_\Xf)
			\ar[r] & H^{\ell-1}(\Xf,\wt{D}|_\Xf) \\
			\ar[r] & \Tor^S_1(H^1(L_0,\Ds),H^{\ell-1}(\Xf,\Os_\Xf))
			\ar[r] & 0.
		\end{tikzcd}
	\end{center}
	
	In particular, $H^n(\Xf,\wt{D}|_\Xf) \isom H^n(L_0,\Ds)$ for $n < \ell-1$.
\end{theorem}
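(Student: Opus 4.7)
The plan is to descend from $\Xf$ to the finite poset $L_0$ via a natural continuous projection, then deduce the K\"unneth-type decomposition from a short exact sequence of sheaves on $L_0$ together with a careful computation of the resulting correction term.

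First I would introduce the continuous projection $\pi\colon\Xf\to L_0$ sending a point $p$ to the smallest flat in $L_0$ containing it, where $L_0$ carries its Alexandrov topology (opens $=$ up-sets). For each $X\in L_0$, setting $g_X:=\prod_{H\in\Ac,\,H\not\supseteq X}\alpha_H$, the preimage of the minimal open neighborhood of $X$ is the principal affine open $D(g_X)\subset\Xf$, hence $R^i\pi_*=0$ for $i>0$ on quasicoherent sheaves. A direct computation gives $\pi_*\Os_\Xf(X) = S_{g_X}$ and $\pi_*(\wt D|_\Xf)(X) = D_{g_X}$; the classical identity $D(\Ac)_{g_X} = D(\Ac_X)_{g_X}$ (inverting the $\alpha_H$ for $H\notin\Ac_X$ removes precisely their defining conditions on derivations) then yields the pivotal identification $\pi_*(\wt D|_\Xf) = \Ds\otimes_S\pi_*\Os_\Xf$ as sheaves of $S$-modules on $L_0$. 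By Leray one obtains $H^n(\Xf,\wt D|_\Xf)\cong H^n(L_0,\Ds\otimes_S\pi_*\Os_\Xf)$ and $H^n(\Xf,\Os_\Xf)\cong H^n(L_0,\pi_*\Os_\Xf)$.

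Next I would consider the short exact sequence $0\to\underline S\to\pi_*\Os_\Xf\to Q\to 0$ on $L_0$, where $\underline S$ is the constant sheaf and $Q:=\pi_*\Os_\Xf/\underline S$. Since each stalk of $\pi_*\Os_\Xf$ is $S$-flat, tensoring with $\Ds$ preserves exactness, giving
\[ 0\to\Ds\to\Ds\otimes_S\pi_*\Os_\Xf\to\Ds\otimes_S Q\to 0 \]
on $L_0$. The associated long exact sequence in sheaf cohomology reduces the theorem to computing $H^*(L_0,\Ds\otimes_S Q)$ and identifying the connecting homomorphisms. From the structure-sheaf sequence one has $H^*(L_0,Q) = H^{\ell-1}(\Xf,\Os_\Xf)=H^\ell_{\mathfrak m}(S)$ concentrated in degree $\ell-1$; combined with a K\"unneth analysis of $\Ds\otimes_S Q$ (via an Eilenberg--Zilber shuffle map applied to the simplicial $S$-modules on the nerve of $L_0$) and the local duality identity $\Tor_k^S(-,H^\ell_{\mathfrak m}(S)) = H^{\ell-k}_{\mathfrak m}(-)$, one obtains $H^n(L_0,\Ds\otimes_S Q) = 0$ for $n<\ell-2$ and explicit formulas at $n=\ell-2,\ell-1$ involving $H^\ell_{\mathfrak m}(D)=D\otimes_S H^{\ell-1}(\Xf,\Os_\Xf)$ and $\Tor_1^S(H^1(L_0,\Ds),H^{\ell-1}(\Xf,\Os_\Xf))$. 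Substituting into the long exact sequence gives the asserted isomorphism for $n\neq\ell-1$ (where the direct sum collapses to $H^n(L_0,\Ds)$ since $H^j(\Xf,\Os_\Xf)=0$ for $j\neq 0,\ell-1$) and the short exact sequence for $n=\ell-1$.

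The main obstacle is the K\"unneth analysis of $H^*(L_0,\Ds\otimes_S Q)$. This requires establishing a K\"unneth comparison between the ``diagonal'' cochain complex of a tensor sheaf on a poset and the tensor product of cochain complexes, and then degenerating the resulting spectral sequence using the concentration of $H^*(L_0,Q)$ in a single degree, the flat stalks of $\pi_*\Os_\Xf$, and the $\mathfrak m$-torsion of the higher lattice cohomology groups $H^i(L_0,\Ds)$ for $i\geq 1$ (the latter a structural consequence of $\Ds$ agreeing with $\wt D$ away from the origin, so that its higher derived limits are supported at $\mathfrak m$). Once these inputs are in hand, the degeneration of the spectral sequence and the identification of the $\Tor_1$ extension at degree $\ell-1$ are routine via local duality.
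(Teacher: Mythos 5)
Your opening moves agree with the paper: the continuous map $\pi\colon\Xf\to L_0$ with $\pi^{-1}(U_X)=D(g_X)$ is the map underlying the paper's Lemma \ref{lem:PosetLerayCohom}, the identity $D(\Ac)_{g_X}=D(\Ac_X)_{g_X}$ is Corollary \ref{coro:LocD}, and the factorization $\pi_*(\wt D|_\Xf)=\Ds\otimes_S\pi_*\Os_\Xf$ is Lemma \ref{lem:SheafDO}. The divergence is in how you extract the K\"unneth decomposition, and that is where your proposal has a genuine gap. You reduce to the short exact sequence $0\to\Ds\to\Ds\otimes_S\pi_*\Os_\Xf\to\Ds\otimes_S Q\to 0$ and then claim a ``K\"unneth analysis'' of $H^*(L_0,\Ds\otimes_S Q)$. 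But $Q$ has stalks $S_{g_X}/S$, which are torsion and in particular not flat, so the algebraic K\"unneth formula you would need (Theorem \ref{thm:Kuenneth}) does not apply to the pair $\bigl(C^\bullet(\Uc_{L_0},\Ds),\ C^\bullet(\Uc_{L_0},Q)\bigr)$: neither \v{C}ech complex has flat terms, and the Eilenberg--Zilber comparison only identifies two models of $H^*(L_0,\Ds\otimes Q)$ -- it does not by itself produce a K\"unneth decomposition without a flatness hypothesis on one side. This is precisely the technical problem the paper sidesteps by keeping the flat factor $\Os_{L_0}$ in the tensor product rather than passing to the quotient $Q$.

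The second ingredient you invoke, that $H^i(L_0,\Ds)$ is $\mf$-torsion for $i\geq 1$ ``because $\Ds$ agrees with $\wt D$ away from the origin,'' is either circular or unsubstantiated in the form you state it: $\Ds$ is a sheaf on the poset $L_0$, not on $\Xf$, and the identification $H^i(L_0,\Ds)\cong H^{i+1}_\mf(D)$ (for $0<i<\ell-1$) is exactly a consequence of the theorem you are proving. Without an independent argument for this torsion property, the spectral-sequence degeneration you sketch cannot close. By contrast, the paper applies Theorem \ref{thm:Kuenneth} directly to $C^\bullet(\Uc_{L_0},\Ds)\otimes_S C^\bullet(\Uc_{L_0},\Os_{L_0})$, where the second factor consists of localizations (hence flat modules), and the only nontrivial flatness to verify is that of the coboundary subcomplex; this is handled by Lemma \ref{lem:AssumKuennethFlat} and Proposition \ref{prop:AssKuenneth}, together with the torsion-freeness of $D$ (Lemma \ref{lem:Tor1TorFree}) to kill the stray $\Tor_1$ term. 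I would suggest abandoning the $Q$-detour and instead working with the \v{C}ech bicomplex of $\Ds$ and $\Os_{L_0}$ directly, as the flatness of the structure sheaf is what makes the K\"unneth machinery go through.
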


Note that sheaf cohomology on the scheme $\Xf$ and
sheaf cohomology on projective space are connected as follows,
see e.g.\
\cite[{n\textsuperscript{o}} 69: Remarque]{Ser55_FAC}.
\begin{remark}
	\label{rem:ProjPunctSpec}
	Let $M$ be a finitely generated graded $S$-module.
	Denote by
	$\wt{M}|_\Xf$ the coherent sheaf associated to $M$ on $\Spec S$ restricted to the open subset 
	$\Xf = \Spec S \setminus \{ \mf \}$
	and by $\wt{M}$ the coherent sheaf on $\PP^{\ell-1} = \Proj S$ associated to $M$.
	Then
		$H^n(\Xf,\wt{M}|_\Xf) \isom \bigoplus_{d \in \ZZ} H^n(\PP^{\ell-1},\wt{M}(d))$
	for $n \geq 0$.
\end{remark}

As a direct consequence of Remark \ref{rem:ProjPunctSpec} and Theorem \ref{thm:IsomCohom} we obtain the following result
which establishes the relationship between the lattice sheaf cohomology studied by
Yuzvinsky and the sheaf cohomology on projective space.
This completely resolves a problem stated by Yoshinaga \cite[Prob.~1.49]{Yos14_FreenessSurv}
and readily yields another proof of Yuzvinsky's freeness criterion using Horrocks' theorem. 
\begin{theorem}
    \label{thm:YuzProj}
    For $n <\ell-1$ we have
    \[
        H^n(L_0,\Ds) \isom \bigoplus_{d \in \ZZ} H^n(\PP^{\ell-1},\wt{D}(d)).
    \]
\end{theorem}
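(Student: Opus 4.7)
The plan is simply to chain together the two results stated immediately before the theorem, both applied to the graded $S$-module $D = D(\Ac)$. First I would invoke the ``in particular'' clause of Theorem \ref{thm:IsomCohom}, which supplies the identification
\[
H^n(\Xf, \wt{D}|_\Xf) \isom H^n(L_0, \Ds) \qquad (n < \ell-1).
\]
Next I would apply Remark \ref{rem:ProjPunctSpec} to the finitely generated graded $S$-module $D$, obtaining
\[
H^n(\Xf, \wt{D}|_\Xf) \isom \bigoplus_{d \in \ZZ} H^n(\PP^{\ell-1}, \wt{D}(d))
\]
for all $n \geq 0$. Splicing these two isomorphisms in the range $n < \ell-1$ yields precisely the asserted formula.

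There is essentially no obstacle: the theorem is a formal corollary of the two preceding results, with all of the substantive content deposited in Theorem \ref{thm:IsomCohom}. The reason the ``in particular'' clause gives such a clean statement in degrees $n<\ell-1$ is the standard vanishing $H^j(\Xf, \Os_\Xf) = 0$ for $0 < j < \ell-1$ (equivalently, the local cohomology vanishing $H^{j+1}_{\mf}(S) = 0$ in that range, reflecting that $\depth_\mf S = \ell$), which collapses the Künneth sum to its $j=0$ term, leaving $H^n(L_0, \Ds) \otimes_S S = H^n(L_0, \Ds)$. The restriction $n < \ell-1$ is sharp, as the nonvanishing of $H^{\ell-1}(\Xf, \Os_\Xf)$ is what forces the $\Tor^S_1$ correction and the short exact sequence in Theorem \ref{thm:IsomCohom}; an analogous correction would intervene if one tried to extend Theorem \ref{thm:YuzProj} to $n=\ell-1$ via Remark \ref{rem:ProjPunctSpec}.
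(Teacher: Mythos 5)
Your proof is correct and matches the paper's reasoning exactly: the paper presents Theorem \ref{thm:YuzProj} as ``a direct consequence of Remark \ref{rem:ProjPunctSpec} and Theorem \ref{thm:IsomCohom},'' precisely the two ingredients you splice together. Your added commentary on why the range $n < \ell-1$ is sharp is accurate and consistent with the structure of Theorem \ref{thm:IsomCohom}.
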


\bigskip

The connection to local cohomology and projective dimension is as follows.
\begin{remark}
	\label{rem:LocalCohom}
	Recall that local cohomology is related to the cohomology on punctured affine space as follows, cf.\ \cite[Prop.~2.2]{Gro67_LocalCohom}.
	For $i>0$ we have:
	\[
		H^{i+1}_\mf(D) \isom H^{i}(\Xf,\wt{D}|_\Xf).
	\]
	
	Furthermore, by \cite[Thm.~3.8]{Gro67_LocalCohom} the depth, 
	respectively the projective dimension $\pd(D)$ (by the Auslander-Buchsbaum formula)
	of the module $D$ is tied to local cohomology by
	\[
		\pd(D) \leq p \,\text{ if and only if }\, H^i_\mf(D) = 0 \text{ for } i < \ell-p. 
	\]
\end{remark}

The module $D$ is reflexive (cf.\ \cite[p.~268]{Saito80_LogForms}) and as such, it
is the dual of another finitely generated module. So $D$ has projective dimension at most $\ell-2$.
Consequently, Theorem \ref{thm:IsomCohom} together with the preceding remark
directly yields the following characterization of the projective dimension of $D$.

\begin{theorem}
	\label{thm:pdD}
	The following two conditions are equivalent:
	\begin{itemize}
		\item[i)]
		$\pd(D) \leq p$;
		
		\item[ii)]
		$H^n(L_0,\Ds) = 0\, \text{ for } \, 0<n<\ell-1-p$.
	\end{itemize}
\end{theorem}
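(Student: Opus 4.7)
The plan is to translate condition (i) into a vanishing statement for the cohomology of $\wt{D}|_\Xf$ on the punctured spectrum, and then invoke Theorem \ref{thm:IsomCohom} to rewrite it as a vanishing statement for Yuzvinsky's lattice sheaf cohomology.

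First I would use the Grothendieck criterion recalled in the second half of Remark \ref{rem:LocalCohom}, which says that condition (i) is equivalent to
\[
	H^i_\mf(D) = 0 \quad \text{for all } i < \ell - p.
\]
Since $D$ is reflexive, as noted in the paragraph preceding the theorem, it has depth at least $2$, so $H^0_\mf(D)$ and $H^1_\mf(D)$ vanish automatically. Consequently the requirement reduces to
\[
	H^i_\mf(D) = 0 \quad \text{for all } 2 \leq i \leq \ell - p - 1.
\]
Next, applying the first isomorphism in Remark \ref{rem:LocalCohom}, namely $H^{i+1}_\mf(D) \isom H^i(\Xf, \wt{D}|_\Xf)$ for $i > 0$, with the substitution $n = i-1$, rewrites this as
\[
	H^n(\Xf, \wt{D}|_\Xf) = 0 \quad \text{for all } 0 < n < \ell - 1 - p.
\]

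To close the loop I would invoke Theorem \ref{thm:IsomCohom}, which yields $H^n(\Xf, \wt{D}|_\Xf) \isom H^n(L_0, \Ds)$ for every $n < \ell - 1$. Since the relevant range $0 < n < \ell - 1 - p$ is contained in $n < \ell - 1$, the previous vanishing translates to
\[
	H^n(L_0, \Ds) = 0 \quad \text{for all } 0 < n < \ell - 1 - p,
\]
which is precisely condition (ii).

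I do not anticipate a genuinely difficult step: the theorem is a bookkeeping corollary of the cited remarks together with Theorem \ref{thm:IsomCohom}. The only points that merit attention are the unit index shift coming from $H^{i+1}_\mf(D) \isom H^i(\Xf, \wt{D}|_\Xf)$, and the role of reflexivity of $D$, which (via depth $\geq 2$) removes any constraint at $n = 0,1$ on the local cohomology side and thereby produces the precise range appearing in (ii) rather than a constraint starting at $n = 0$.
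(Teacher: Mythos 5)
Your argument is correct and is exactly the route the paper intends: the paper states that Theorem \ref{thm:pdD} follows ``directly'' from Theorem \ref{thm:IsomCohom} together with Remark \ref{rem:LocalCohom} and the reflexivity of $D$, and you have simply made the index bookkeeping explicit (Grothendieck's depth criterion, the shift $H^{i+1}_\mf(D)\isom H^i(\Xf,\wt D|_\Xf)$, depth $\geq 2$ killing $H^0_\mf$ and $H^1_\mf$, and then Theorem \ref{thm:IsomCohom} in the range $n<\ell-1$). No gap.
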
  

Finally, as a consequence to Theorem \ref{thm:pdD}, we obtain the following stronger form of 
Yuzvinsky's freeness criterion \cite[Thm.~1.1]{Yuz93_FirstTwoObstr}
showing the assumption of $\Ac$ being locally free to be superfluous.

\begin{corollary}
	\label{cor:CharFreeness}
	The arrangement $\Ac$ is free if and only if
	\[
	H^n(L_0,\Ds) = 0 \quad \text{ for } \quad 0<n<\ell-1.
	\]
\end{corollary}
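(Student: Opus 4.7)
The plan is to derive Corollary \ref{cor:CharFreeness} directly from Theorem \ref{thm:pdD} by choosing the parameter $p$ to capture the freeness of $D = D(\Ac)$.

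First, I would recall that by definition $\Ac$ is free precisely when the graded $S$-module $D(\Ac)$ is free. Since $S = \KK[x_1,\ldots,x_\ell]$ is a graded polynomial ring and $D$ is a finitely generated graded $S$-module, being free is equivalent to being projective (graded projective modules over a graded polynomial ring are free), which in turn is equivalent to $\pd(D) \leq 0$. Thus, translating freeness of $\Ac$ into a statement about projective dimension is the key reduction.

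Next, I would invoke Theorem \ref{thm:pdD} with $p = 0$. The theorem states that $\pd(D) \leq p$ if and only if $H^n(L_0,\Ds) = 0$ for $0 < n < \ell - 1 - p$. Setting $p = 0$ gives exactly
\[
	\pd(D) \leq 0 \quad \Longleftrightarrow \quad H^n(L_0,\Ds) = 0 \text{ for } 0 < n < \ell-1,
\]
which is the desired statement once combined with the equivalence $\pd(D) \leq 0 \Leftrightarrow \Ac \text{ free}$.

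I do not foresee a genuine obstacle here, since all the work has been done in Theorem \ref{thm:IsomCohom} and Theorem \ref{thm:pdD}; the only subtle point worth emphasizing is that the hypothesis of local freeness appearing in Yuzvinsky's original criterion \cite[Thm.~1.1]{Yuz93_FirstTwoObstr} is no longer needed, because the reflexivity of $D$ (giving $\pd(D) \leq \ell - 2$) together with the characterization in Theorem \ref{thm:pdD} already forces the freeness of $D$ from vanishing of the intermediate lattice cohomology groups, without any local assumption on $\Ac$.
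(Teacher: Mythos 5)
Your proof is correct and is essentially the argument the paper intends: the corollary is stated as an immediate consequence of Theorem \ref{thm:pdD}, and setting $p=0$ there (together with the standard fact that a finitely generated graded projective module over the graded polynomial ring $S$ is free, so $\Ac$ free $\Leftrightarrow \pd(D)\leq 0$) gives exactly the claimed equivalence. Your closing observation that this removes the local-freeness hypothesis from Yuzvinsky's original criterion also matches the remark the paper makes right before stating the corollary.
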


\bigskip

This paper is organized as follows.
In Section \ref{sect:Recoll} we review some basic notions from the theory of hyperplane arrangements.
Furthermore, we recall some
results from homological algebra and sheaf theory.
In Section \ref{sec:PosetSheaves} we review sheaves on posets and their cohomology.
Section \ref{sec:LASheaves} gives further details about some special sheaves on the intersection lattice
of an arrangement. 
In Section \ref{sec:Proofs} we prove Theorem \ref{thm:IsomCohom} and finally, in Section \ref{sec:ConclRmks}
we comment on possible generalizations of our approach and related problems.


\section{Recollection and preliminaries}
\label{sect:Recoll}

In this note $V \isom \KK^\ell$ always denotes an $\ell$-dimensional $\KK$-vector space
over some field $\KK$ where $\ell \geq 2$.

\bigskip

Let $S = \KK[x_1,\ldots,x_\ell]$ be the coordinate ring of $V$.
The ring $S$ is graded: $S = \bigoplus_{p\in \ZZ} S_p$ where 
$S_p$ is the $\KK$-space of homogeneous polynomials of degree $p$ (along with $0$) and $S_p = \{0\}$ for $p < 0$.

If $f \in S$ then we write $S_f = S[\frac{1}{f}]$ for the localization of $S$ by $f$ and similarly for an
$S$-module $M$ we write $M_f = M \otimes_S S_f$.


\subsection{Hyperplane arrangements}
\label{ssect:Recoll_Arr}

As a general reference for hyperplane arrangements we refer to the book by Orlik and Terao  \cite{OrTer92_Arr}.

Let $\Ac = (\Ac, V)$ be a hyperplane arrangement in $V$, that is a finite set of codimension one
subspaces of $V$.
The \emph{intersection lattice} of $\Ac$ is 
\[
    L(\Ac) = \left\{ \cap_{H \in \Bc} H \mid \Bc \subseteq \Ac \right\}
\]
with the partial order
\[
    X \leq Y \,:\iff\, X\supseteq Y \quad (X,Y \in L(\Ac)).
\]

In this note we always assume $\Ac$ to be \emph{essential}, that is 
for the maximal element $T(\Ac) := \cap_{H \in \Ac} H$ in $L(\Ac)$ we have $T(\Ac) = \{0\}$.

\bigskip

For $X \in L(\Ac)$ the \emph{localization $\Ac_X$} of $\Ac$ at $X$ is
\[
\Ac_X := \{H \in \Ac  \mid H \supseteq X \}.
\]

If $X,Y \in L(\Ac)$ then $X\wedge Y := \sup \{Z \in L(\Ac) \mid Z \leq X$ and $Z \leq Y \}$.
Note that we have $\Ac_{X\wedge Y} = \Ac_X \cap \Ac_Y$.

\bigskip

For all hyperplanes $H \in \Ac$ we fix $\alpha_H \in V^*$ with $H = \ker(\alpha_H)$.
The \emph{defining polynomial} $Q(\Ac)$ of $\Ac$ is
\[
    Q(\Ac) := \prod_{H \in \Ac} \alpha_H.
\]

A $\KK$-linear map $\theta:S\to S$ which satisfies $\theta(fg) = \theta(f)g + f\theta(g)$ is called a $\KK$-\emph{derivation}.
Let $\Der_\KK(S)$ be the $S$-module of $\KK$-derivations of $S$. 
It is a free $S$-module with basis $\ddxi{1},\ldots,\ddxi{\ell}$.

\begin{definition}
    \label{def:DerMod}
    The \emph{module of $\Ac$-derivations} is the $S$-submodule of $\Der_\KK(S)$ defined by
    \begin{equation*}
      D(\Ac) := \{ \theta \in \Der_\KK(S) \mid \theta(\alpha_H) \in \alpha_H S \text{ for all } H \in \Ac\}.
    \end{equation*}
    In particular, if $\Bc \subseteq \Ac$, then $D(\Ac) \subseteq D(\Bc)$.

    We say that $\Ac$ is \emph{free} if the module of $\Ac$-derivations is a free $S$-module. 
\end{definition}

\begin{definition}
    \label{def:QXpolys}    
    For $X \in L(\Ac)$ we define
    \[
        Q(X) := \prod_{H \in \Ac \setminus \Ac_X} \alpha_H 
            = \frac{Q(\Ac)}{Q(\Ac_X)}.
    \]
\end{definition}

The following observation provides a crucial ingredient in the proof of Theorem \ref{thm:IsomCohom}.

\begin{lemma}
    \label{lem:LocD}
    For all $X,Y \in L(\Ac)$ we have:
    \[
        D(\Ac_Y)_{Q(X)} = D(\Ac_{X\wedge Y})_{Q(X)}.
    \]    
\end{lemma}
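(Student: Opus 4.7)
The plan is to prove the two inclusions separately, with the containment $D(\Ac_Y)_{Q(X)} \subseteq D(\Ac_{X \wedge Y})_{Q(X)}$ being immediate and essentially formal, and the reverse containment being the one that uses the localization in an essential way.

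For the easy inclusion, I would start from the identity $\Ac_{X \wedge Y} = \Ac_X \cap \Ac_Y$ recorded in the preliminaries, which gives $\Ac_{X\wedge Y} \subseteq \Ac_Y$. By the monotonicity remark in Definition \ref{def:DerMod}, this yields $D(\Ac_Y) \subseteq D(\Ac_{X\wedge Y})$, and localizing at $Q(X)$ preserves the inclusion.

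For the reverse inclusion, the key observation I would highlight is the set-theoretic identity
\[
    \Ac_Y \setminus \Ac_{X\wedge Y} \;=\; \Ac_Y \setminus (\Ac_X \cap \Ac_Y) \;=\; \Ac_Y \setminus \Ac_X \;\subseteq\; \Ac \setminus \Ac_X.
\]
Thus every hyperplane $H$ lying in $\Ac_Y$ but not in $\Ac_{X\wedge Y}$ contributes its defining form $\alpha_H$ as a factor of $Q(X) = \prod_{H \in \Ac \setminus \Ac_X} \alpha_H$; in particular $\alpha_H$ becomes a unit in $S_{Q(X)}$. Given $\theta \in D(\Ac_{X\wedge Y})_{Q(X)}$, the derivation condition $\theta(\alpha_H) \in \alpha_H S_{Q(X)}$ is automatic for such $H$, since the right-hand side is the whole of $S_{Q(X)}$. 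For the remaining hyperplanes $H \in \Ac_Y \cap \Ac_X = \Ac_{X\wedge Y}$, the condition $\theta(\alpha_H) \in \alpha_H S_{Q(X)}$ holds by hypothesis. Combining the two cases, $\theta \in D(\Ac_Y)_{Q(X)}$.

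I do not anticipate a real obstacle here: the argument is purely bookkeeping once one notices that inverting $Q(X)$ kills precisely the derivation constraints coming from hyperplanes outside $\Ac_X$. The only subtle point to state carefully is that localization commutes with the intersection defining $D(-)$, i.e.\ $D(\Ac')_{Q(X)} = \bigcap_{H \in \Ac'} \{ \theta \in \Der_\KK(S)_{Q(X)} \mid \theta(\alpha_H) \in \alpha_H S_{Q(X)} \}$ for any subarrangement $\Ac'$; this follows from flatness of the localization map $S \to S_{Q(X)}$ applied to the finite intersection of submodules cutting out $D(\Ac')$ inside $\Der_\KK(S)$.
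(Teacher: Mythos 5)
Your proof is correct and captures the same central observation as the paper: inverting $Q(X)$ trivializes the derivation constraint $\theta(\alpha_H)\in\alpha_H S_{Q(X)}$ for every $H\in\Ac\setminus\Ac_X$, and these are precisely the hyperplanes that distinguish $\Ac_Y$ from $\Ac_{X\wedge Y}$. The packaging differs, though. The paper realizes $D(\Ac_Y)$ as the kernel of an explicit map $M_Y\colon S^\ell\to\bigoplus_{H\in\Ac_Y}S/\alpha_H S$, localizes that sequence (so that the summands indexed by $H\notin\Ac_X$ vanish), and concludes via the five-lemma applied to the commutative diagram built from the inclusion $D(\Ac_Y)\hookrightarrow D(\Ac_{X\wedge Y})$. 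You instead work element-wise with the intersection description of $D(-)$, splitting into two inclusions: the forward one is purely formal from $\Ac_{X\wedge Y}\subseteq\Ac_Y$, and the reverse one is obtained by checking each constraint directly, noting $\Ac_Y\setminus\Ac_{X\wedge Y}\subseteq\Ac\setminus\Ac_X$. Both versions ultimately rest on the flatness of $S\to S_{Q(X)}$: the paper uses it through exactness of localization of a sequence of modules, while you use it (correctly, and you are right to flag it as the one nontrivial point) to commute localization past the finite intersection of submodules cutting out $D(\Ac')$ inside $\Der_\KK(S)$. Your route is a little more elementary and avoids any diagram chase, at the cost of stating the commutation of localization with finite intersections explicitly; the paper's version trades that for a more functorial presentation that dovetails with how the lemma is reapplied in Proposition \ref{prop:ResTensorProd}.
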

\begin{proof}
    For each $H \in \Ac$ we define the $S$-module homomorphism $M_H:S^\ell \to S/ \alpha_H S$ by 
    \[
        M_H(f_1,\ldots,f_\ell) := \sum_{i=1}^\ell f_i \frac{\partial\alpha_H}{\partial x_i} + \alpha_H S. 
    \]
    For $X \in L(\Ac)$ we set 
    \[
        M_X:=\sum_{H \in \Ac_X} M_H:S^\ell \to \bigoplus_{H \in \Ac_X} S/ {\alpha_H S}.
    \]

    From the definition of $D(\Ac_Y)$ we have the following short exact sequence
    
    \begin{center}
    \begin{tikzcd}
        0 
        \ar[r] & D(\Ac_Y) 
        \ar[r] & S^\ell 
        \arrow{r}{M_Y} & \bigoplus_{H \in \Ac_Y} S/ {\alpha_H S} 
        \ar[r] & 0.
    \end{tikzcd}
    \end{center}

    If we localize at $Q(X)$, for each $H \in \Ac \setminus \Ac_X$ we have
    \begin{align*}
        S/ {\alpha_H S} \otimes_S S_{Q(X)} &= \, 0, \\
        M_H \otimes_S \id_{S_{Q(X)}} &\equiv \, 0.
    \end{align*}
    Recall, that $\Ac_{X\wedge Y} = \Ac_X \cap \Ac_Y$, thus
    \[
        \bigoplus_{H \in \Ac_Y} S / {\alpha_H S} \otimes_S S_{Q(X)} = \bigoplus_{H \in \Ac_{X\wedge Y}} S / {\alpha_H S} \otimes_S S_{Q(X)}.
    \]
    Further, recall that there is a natural inclusion
    $i:D(\Ac_Y) \hookrightarrow D(\Ac_{X\wedge Y})$.

    Since localization at $Q(X)$ is an exact functor, we obtain the following commutative diagram with
    exact rows:
    
    \begin{center}
    \begin{tikzcd}[column sep=5mm, scale cd=0.9]
        0 
        \ar[r] \ar[d, equal] & D(\Ac_Y)_{Q(X)}
        \ar[r] \ar[d, hook, "i\otimes_S \id"] & S_{Q(X)}^\ell
        \arrow{r}{M_Y \otimes_S \id } \ar[d, equal] &[0.8cm] \bigoplus_{H \in \Ac_Y} S/ {\alpha_H S}  \otimes_S S_{Q(X)}
        \ar[r] \ar[d, equal] & 0 \ar[d, equal] \\
        0 
        \ar[r] & D(\Ac_{X\wedge Y} )_{Q(X)}
        \ar[r] & S_{Q(X)}^\ell 
        \arrow{r}{ M_{X\wedge Y} \otimes_S \id } &[0.8cm] \bigoplus_{H \in \Ac_{X\wedge Y} } S/ {\alpha_H S}  \otimes_S S_{Q(X)}
        \ar[r] & 0 \\
    \end{tikzcd}
    \end{center}
    Hence $i\otimes_S \id$ yields the equality (e.g.\ by the five-lemma and extending the diagram by additional zeros to the left).
\end{proof}

We record the following special case of Lemma \ref{lem:LocD}.

\begin{corollary}
	Let $X \in L(\Ac)$. Then we have
    \label{coro:LocD}
    \[
        D(\Ac_X)_{Q(X)} = D(\Ac)_{Q(X)}.
    \]
\end{corollary}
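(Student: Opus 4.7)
The plan is to derive this corollary directly from Lemma \ref{lem:LocD} by making a judicious choice of $Y$. Since $\Ac$ is essential by assumption, $T(\Ac) = \{0\}$ lies in $L(\Ac)$ and is the maximum element of the poset $(L(\Ac), \leq)$ (every hyperplane contains the origin). First I would take $Y := T(\Ac) = \{0\}$ and observe that $\Ac_Y = \Ac_{\{0\}} = \Ac$, because the condition $H \supseteq \{0\}$ is satisfied by every hyperplane in $\Ac$.

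Next I would compute $X \wedge Y$ using the characterization $\Ac_{X \wedge Y} = \Ac_X \cap \Ac_Y$ recorded just after the definition of the meet. With $Y = \{0\}$ this yields $\Ac_{X \wedge \{0\}} = \Ac_X \cap \Ac = \Ac_X$, and since an element of $L(\Ac)$ is uniquely determined by its localization subarrangement (being the intersection of the hyperplanes therein), we conclude $X \wedge \{0\} = X$.

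Plugging $Y = \{0\}$ into the conclusion of Lemma \ref{lem:LocD} then gives exactly
\[
    D(\Ac)_{Q(X)} = D(\Ac_Y)_{Q(X)} = D(\Ac_{X \wedge Y})_{Q(X)} = D(\Ac_X)_{Q(X)},
\]
which is the assertion. There is no real obstacle here; the only thing to verify is that the maximum element $\{0\}$ of $L(\Ac)$ genuinely realizes the two identities $\Ac_{\{0\}} = \Ac$ and $X \wedge \{0\} = X$, both of which are immediate from the definitions together with the essentiality assumption. (Alternatively one could give a direct one-line argument: the inclusion $D(\Ac) \hookrightarrow D(\Ac_X)$ becomes an equality after inverting $Q(X)$, since for every $H \in \Ac \setminus \Ac_X$ the linear form $\alpha_H$ divides $Q(X)$ and thus becomes a unit in $S_{Q(X)}$, making the defining condition $\theta(\alpha_H) \in \alpha_H S_{Q(X)}$ vacuous.)
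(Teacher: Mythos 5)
Your proof is correct and follows the paper's own argument exactly: both set $Y = T(\Ac) = \{0\}$ in Lemma \ref{lem:LocD} and observe that $\Ac_Y = \Ac$ and $X \wedge Y = X$. The parenthetical direct argument at the end is a valid alternative, but the main route you take matches the paper verbatim, just with the immediate verifications spelled out.
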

\begin{proof}
    Let $Y=T(\Ac)$ in Lemma \ref{lem:LocD} and note that then $X\wedge Y = X$.
\end{proof}

\begin{remark}
	The preceding lemma and corollary can also be seen as a consequence of the local property of the functor or sheaf $\Ds$ \cite[Prop.~6.6]{SolTer81_FormulaCharPoly}: for a generic point $p \in X$ we have $D(\Ac_X)_p \isom D(\Ac)_p$.
\end{remark}


\subsection{Homological algebra}
\label{ssect:Recoll_HomAlg}

For the basics we refer to \cite{Rot09_IntroHomAlg}.
Let 

\begin{center}
\begin{tikzcd}[column sep=8mm]
    C^\bullet \quad = \quad \cdots 
    \ar[r] & C^{n-1} 
    \ar{r}{d^{n-1}} &C^n 
    \ar{r}{d^n} &C^{n+1} 
    \ar[r] &\cdots
\end{tikzcd}
\end{center}
be a cochain complex of abelian groups ($S$-modules). 
Then we write
\[
    H^n(C^\bullet) 
        = Z^n / B^n,
\]
for the $n$-th cohomology group (module),
where $Z^n = \ker(d^n)$ is the group ($S$-module) of $n$-cocycles and 
$B^n=\im(d^{n-1})$ is the group ($S$-module) of $n$-coboundaries.
Note that with the zero-coboundary maps
\begin{tikzcd}[column sep=14mm]
	B^n \ar{r}{d^n|_{B^n} \equiv 0} &B^{n+1} 
\end{tikzcd}
and the inclusion maps $i^n: B^n \hookrightarrow C^n$ the complex $B^\bullet$ 
is a subcomplex of $C^\bullet$ called the \emph{coboundary-subcomplex}.

Let $A^\bullet$ and $C^\bullet$ be two cochain complexes of $S$-modules with coboundary maps $d_A$ and $d_C$ respectively.
By $A^\bullet \otimes_S C^\bullet$ we denote their tensor product which is defined as
the total complex of the associated bicomplex, i.e.\
\[
    (A^\bullet \otimes_S C^\bullet)^n := \bigoplus_{i+j=n} A^i \otimes_S C^j
\] 
with coboundary maps 
\[
	d^n(a \otimes_S c) = d^i_A(a) \otimes_S c + (-1)^i a \otimes_S d_C^j(c)
\]
for $a \in A^i$, $c \in C^j$ and $i+j=n$.

To later guarantee the exactness of a tensor product of 
two special resolutions of sheaves, 
we require the following special case of the acyclic assembly lemma, cf.\ \cite[Lem.~2.7.3]{Wei94_HomAlg}.
\begin{lemma}
    \label{lem:AcycAss}
    Let $C^{\bullet,\bullet}$ be a bounded first quadrant bicomplex in an abelian category.
    If $C^{\bullet,\bullet}$ has exact rows or columns then $\Tot(C^{\bullet,\bullet})^\bullet$
    which is given by
    \[
        \Tot(C^{\bullet,\bullet})^n = \bigoplus_{i+j=n}C^{i,j}
    \]
    is also exact.
\end{lemma}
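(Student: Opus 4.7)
The plan is to prove the lemma by a direct ``staircase'' argument on the bigraded components of cocycles, exploiting the first-quadrant boundedness to ensure the process terminates. Transposing the bicomplex (swapping the roles of rows and columns, with an appropriate sign rearrangement on the total differential) shows that it suffices to treat the case of exact rows; so I assume the row $C^{\bullet,q}$ is exact for every $q$. I denote the horizontal and vertical differentials by $d_h$ and $d_v$, normalized so that the total differential on $C^{p,q}$ is $d = d_h + (-1)^p d_v$, which is a differential precisely because $d_h^2 = 0$, $d_v^2 = 0$, and $d_h d_v = d_v d_h$.

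Given a total cocycle $c \in \Tot(C^{\bullet,\bullet})^n$, write $c = \sum_{p+q=n} c^{p,q}$ with $c^{p,q} \in C^{p,q}$. Since the bicomplex sits in the first quadrant and is bounded, only finitely many $c^{p,q}$ are nonzero, so I may consider the maximal $p_0$ with $c^{p_0,q_0} \neq 0$ (where $q_0 = n - p_0$). Projecting the identity $dc = 0$ onto bidegree $(p_0+1, q_0)$, the only potential contributions are $d_h c^{p_0,q_0}$ and $\pm d_v c^{p_0+1,q_0-1}$; by maximality of $p_0$ the second term vanishes, giving $d_h c^{p_0,q_0} = 0$. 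By row-exactness at position $(p_0,q_0)$ there exists $b^{p_0-1,q_0} \in C^{p_0-1,q_0}$ with $d_h b^{p_0-1,q_0} = c^{p_0,q_0}$; the boundary case $p_0 = 0$ is handled separately, since then the same cocycle projection together with injectivity of $d_h$ at the leftmost spot in the row $C^{\bullet, n}$ forces $c^{0,n} = 0$, contradicting $c^{p_0,q_0}\neq 0$.

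Now subtract: the element $c' := c - d b^{p_0-1,q_0}$ is again a total cocycle of degree $n$. Its bidegree $(p_0, q_0)$ component vanishes by construction, its bidegree $(p_0-1, q_0+1)$ component is modified by $\pm d_v b^{p_0-1,q_0}$, and all components in horizontal degrees $> p_0$ remain zero. Thus the maximal horizontal degree in which $c'$ is supported is strictly less than $p_0$. Iterating this reduction finitely many times (the process terminates by first-quadrant boundedness, since $p_0 \geq 0$), I realize $c$ as a total coboundary, proving $H^n(\Tot(C^{\bullet,\bullet})^\bullet) = 0$.

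The main obstacle is essentially bookkeeping: one must keep careful track of the sign convention in $d$ so that subtracting $d b^{p_0-1,q_0}$ genuinely kills the chosen bihomogeneous piece without introducing new top-degree contributions, and one must treat the edge case $p_0 = 0$ using exactness at the leftmost slot (which is where the full exactness of rows, as opposed to only $\ker = \operatorname{im}$ in the middle, is used). Alternatively, one can invoke the standard spectral sequence of a bicomplex: the first-page $E_1^{p,q} = H^q_h(C^{p,\bullet}) = 0$ (with rows exact) collapses immediately to $E_\infty = 0$, and convergence to $H^\bullet(\Tot)$ is unproblematic because the bicomplex is first-quadrant and bounded; but the diagonal chase above is self-contained and avoids invoking that machinery.
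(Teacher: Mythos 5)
Your staircase diagram chase is correct and is essentially the standard argument. The paper does not give its own proof but cites Weibel, Lemma~2.7.3, which is the same style of diagonal chase; your self-contained version (kill the top-horizontal-degree bihomogeneous piece, handle the edge case $p_0=0$ by injectivity at the left end of the row, terminate by first-quadrant boundedness) matches that approach, and the element-chasing in a general abelian category is standard via Freyd--Mitchell or pseudo-elements.
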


One crucial ingredient for the proof of our main Theorem \ref{thm:IsomCohom} is the following K\"unneth formula 
for the cohomology of the tensor product of two complexes.

\begin{theorem}[{\cite[Thm.~10.81]{Rot09_IntroHomAlg}}]
	\label{thm:Kuenneth}
	Let $A^\bullet$ and $C^\bullet$ be two cochain complexes of $S$-modules.
	Suppose that all terms of $C^\bullet$ and all terms of its coboundary-subcomplex
	are flat $S$-modules.
	
	Then for each $n$ there is a short exact sequence
	
	\begin{center}
	\begin{tikzcd}[column sep=5mm]
		0 
		\ar[r] & \bigoplus_{i+j=n} H^i(A^\bullet) \otimes_S H^j(C^\bullet) 
		\ar[r] & H^n(A^\bullet \otimes_S C^\bullet) \\
		\ar[r] & \bigoplus_{i+j=n+1} \Tor^S_1(H^i(A^\bullet),H^j(C^\bullet)) 
		\ar[r] & 0.
	\end{tikzcd}
	\end{center}
\end{theorem}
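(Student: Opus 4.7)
The plan is to follow the classical ``killing the differential'' strategy, using the cocycle and coboundary subcomplexes $Z^\bullet$ and $B^\bullet$ of $C^\bullet$ (both regarded as complexes with zero differentials). I would work with the short exact sequence of complexes
\begin{equation*}
 0 \longrightarrow Z^\bullet \longrightarrow C^\bullet \xrightarrow{\ d_C\ } B^{\bullet+1} \longrightarrow 0,
\end{equation*}
where the right-hand map is just the differential of $C^\bullet$, landing in the (shifted) coboundary complex.

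First I would establish the auxiliary flatness statement that each $Z^n$ is flat. This is not assumed in the hypothesis, but it follows for free: from the degreewise short exact sequence $0 \to Z^n \to C^n \to B^{n+1} \to 0$, the long exact Tor sequence, and the hypothesis that $C^n$ and $B^{n+1}$ are flat, one gets $\Tor^S_i(Z^n,M)=0$ for all $i\geq 1$ and all $M$. Because $B^{\bullet+1}$ is flat in every degree, tensoring the displayed short exact sequence of complexes with $A^\bullet$ preserves exactness in each bidegree, and hence produces a short exact sequence of total complexes
\begin{equation*}
 0 \longrightarrow A^\bullet \otimes_S Z^\bullet \longrightarrow A^\bullet \otimes_S C^\bullet \longrightarrow A^\bullet \otimes_S B^{\bullet+1} \longrightarrow 0.
\end{equation*}
Here I would want to invoke Lemma~\ref{lem:AcycAss} (or a direct check, since one column is exact in each bidegree) to be sure the Tot of an exact bicomplex is exact, so that the associated long exact sequence in cohomology is available.

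Next I would compute the cohomology of the two outer terms. Because $Z^\bullet$ and $B^\bullet$ carry zero differential and their components are flat $S$-modules, the total complex $A^\bullet \otimes_S Z^\bullet$ decomposes as a direct sum of shifted copies of $A^\bullet$ tensored with flat modules, giving
\begin{equation*}
 H^n(A^\bullet \otimes_S Z^\bullet) \isom \bigoplus_{i+j=n} H^i(A^\bullet) \otimes_S Z^j,
\end{equation*}
and analogously $H^n(A^\bullet \otimes_S B^{\bullet+1}) \isom \bigoplus_{i+j=n+1} H^i(A^\bullet) \otimes_S B^j$. The long exact sequence of the short exact sequence of complexes above then yields a connecting homomorphism whose key property — and this is the step that needs care — is that, under these identifications, it is induced termwise by the inclusion $B^j \hookrightarrow Z^j$ tensored with the identity on $H^i(A^\bullet)$. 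I would verify this by a direct cocycle chase at the bicomplex level (unwinding the snake lemma that defines $\delta$), which is the routine but fiddly part.

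Finally, I would feed these identifications back into the long exact sequence to extract the stated short exact sequence. The map $\bigoplus H^i(A^\bullet)\otimes_S B^j \to \bigoplus H^i(A^\bullet)\otimes_S Z^j$ induced by $B^j\hookrightarrow Z^j$ has, by the short exact sequence $0\to B^j\to Z^j\to H^j(C^\bullet)\to 0$ and the Tor long exact sequence (using flatness of $B^j$ and $Z^j$), cokernel $\bigoplus H^i(A^\bullet)\otimes_S H^j(C^\bullet)$ and kernel $\bigoplus \Tor^S_1(H^i(A^\bullet),H^j(C^\bullet))$. Slotting these into the fragment
\begin{equation*}
0\longrightarrow \operatorname{coker}(\delta_{n-1}) \longrightarrow H^n(A^\bullet\otimes_S C^\bullet) \longrightarrow \ker(\delta_n) \longrightarrow 0
\end{equation*}
of the long exact sequence, with the correct reindexing $j\mapsto j+1$ on the $B$-side, produces the K\"unneth short exact sequence as stated. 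The main obstacle I anticipate is the bookkeeping: checking that $Z^n$ is flat, verifying the identification of the connecting homomorphism with the inclusion-induced map, and matching indices $i+j=n$ versus $i+j=n+1$ on the Tor side without sign or degree errors.
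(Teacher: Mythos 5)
Your proposal is correct: the paper does not prove this theorem but simply cites \cite[Thm.~10.81]{Rot09_IntroHomAlg}, and your argument is precisely the standard proof given there (splitting off $Z^\bullet$ and $B^{\bullet+1}$ with zero differentials, using flatness of $B^{n+1}$ to deduce flatness of $Z^n$ and to preserve exactness after tensoring, identifying the connecting map with the inclusion $B^j\hookrightarrow Z^j$, and reading off kernel and cokernel from the Tor sequence of $0\to B^j\to Z^j\to H^j(C^\bullet)\to 0$). The index bookkeeping you describe also comes out right, so nothing further is needed.
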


\begin{corollary}
	\label{coro:Kuenneth}
	Let $A^\bullet$ and $C^\bullet$ be two cochain complexes of $S$-modules.
	Suppose that all terms of $C^\bullet$ and all terms of its coboundary-subcomplex
	are flat $S$-modules.
	Suppose further that $H^p(C^\bullet)$ is flat for $p < k$,
	$H^p(A^\bullet \otimes_S C^\bullet) = 0$ for $p >k$
	and $\Tor^S_1(H^0(A^\bullet),H^k(C^\bullet)) = 0$.
	
	Then for all $n \neq k$ we have
	\[
		H^n(A^\bullet \otimes_S C^\bullet) \isom \bigoplus_{i+j=n} H^i(A^\bullet) \otimes_S H^j(C^\bullet)
	\]
	and for $n = k$ we have a short exact sequence
	\begin{center}
		\begin{tikzcd}[column sep=5mm]
			0 
			\ar[r] & \bigoplus_{i+j=k} H^i(A^\bullet) \otimes_S H^j(C^\bullet) 
			\ar[r] & H^k(A^\bullet \otimes_S C^\bullet) \\
			\ar[r] & \Tor^S_1(H^1(A^\bullet),H^k(C^\bullet)) 
			\ar[r] & 0.
		\end{tikzcd}
	\end{center}
	
\end{corollary}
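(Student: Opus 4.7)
The plan is to apply the Künneth Theorem \ref{thm:Kuenneth} directly and then use the extra hypotheses to collapse most of the $\Tor$ summands. The hypotheses on $C^\bullet$ required by Theorem \ref{thm:Kuenneth} are precisely those we are given, so for every $n$ we obtain
\begin{center}
\begin{tikzcd}[column sep=5mm]
0 \ar[r] & \bigoplus_{i+j=n} H^i(A^\bullet) \otimes_S H^j(C^\bullet) \ar[r] & H^n(A^\bullet \otimes_S C^\bullet) \\
\ar[r] & \bigoplus_{i+j=n+1} \Tor^S_1(H^i(A^\bullet),H^j(C^\bullet)) \ar[r] & 0,
\end{tikzcd}
\end{center}
and the task reduces to identifying which summands $\Tor_1^S(H^i(A^\bullet),H^j(C^\bullet))$ with $i+j=n+1$ can be nonzero.

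The flatness of $H^j(C^\bullet)$ for $j<k$ immediately annihilates every summand with $j<k$. This alone settles all $n<k-1$, where every pair $(i,j)$ in the sum has $j\leq n+1<k$. At $n=k-1$ the only remaining summand is $(i,j)=(0,k)$, which is zero by the hypothesis $\Tor_1^S(H^0(A^\bullet),H^k(C^\bullet))=0$. For $n>k$ the hypothesis $H^n(A^\bullet\otimes_S C^\bullet)=0$ makes the middle term of the Künneth sequence vanish, forcing both outer terms to be zero and giving the claimed isomorphism as the tautology $0\isom 0$.

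The critical degree is $n=k$. Flatness again removes all summands with $j<k$, the pair $(i,j)=(1,k)$ yields exactly the $\Tor$-term retained in the statement, and only $\Tor_1^S(H^0(A^\bullet),H^{k+1}(C^\bullet))$ remains as a potential obstruction. Disposing of this stray summand is the main technical point: the Künneth sequence at $n=k+1$ (whose middle term vanishes by hypothesis) forces $H^0(A^\bullet)\otimes_S H^{k+1}(C^\bullet)=0$, and one must appeal to structural features of $C^\bullet$ present in the intended application---where $H^{k+1}(C^\bullet)=0$ outright, since $C^\bullet$ computes sheaf cohomology on a scheme of dimension $k+1$---to kill the corresponding $\Tor_1^S$. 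Once this last summand is eliminated, the Künneth sequence at $n=k$ specializes to precisely the short exact sequence asserted, completing the argument.
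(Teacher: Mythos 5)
Your argument follows the paper's proof essentially line for line: apply Theorem \ref{thm:Kuenneth}, then collapse the $\Tor^S_1$ summands using the flatness of $H^j(C^\bullet)$ for $j<k$ and the assumed vanishing of $\Tor^S_1(H^0(A^\bullet),H^k(C^\bullet))$, and note that for $n>k$ both sides of the K\"unneth sequence vanish by the hypothesis on $H^n(A^\bullet\otimes_S C^\bullet)$.

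You are moreover right to single out the summand $\Tor^S_1(H^0(A^\bullet),H^{k+1}(C^\bullet))$ at $n=k$ as the delicate point, and here you are in fact more careful than the paper's own proof. The paper dispatches the case $n=k$ by invoking only the flatness of $H^p(C^\bullet)$ for $p<k$ together with the vanishing of $\Tor^S_1(H^0(A^\bullet),H^k(C^\bullet))$, but at $n=k$ the relevant $\Tor$-sum ranges over $i+j=k+1$, so neither of those hypotheses touches the pair $(i,j)=(0,k+1)$. As you observe, $H^{k+1}(A^\bullet\otimes_S C^\bullet)=0$ yields only $H^0(A^\bullet)\otimes_S H^{k+1}(C^\bullet)=0$, which does not in general force $\Tor^S_1(H^0(A^\bullet),H^{k+1}(C^\bullet))$ to vanish (this implication does hold for finitely generated modules over a Noetherian ring by a support argument, but the cohomology groups of \v{C}ech complexes built from localizations need not be finitely generated). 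So the corollary as stated should carry an additional hypothesis, for instance $\Tor^S_1(H^0(A^\bullet),H^{k+1}(C^\bullet))=0$, or more simply $H^{k+1}(C^\bullet)=0$; your instinct to reach for the application to supply that input is exactly right. One small slip in how you phrase it: in Proposition \ref{prop:TensorProdDO} the complex $C^\bullet(\Uc_{L_0},\Os_{L_0})$ computes cohomology on the finite poset $L_0$, not on a scheme, and $\dim L_0 = \ell-1 = k$ rather than $k+1$; the needed vanishing $H^{k+1}(C^\bullet)=0$ is then exactly Theorem \ref{thm:CohomPosetMax} (or read off directly from Proposition \ref{prop:CohomStructSheaf}).
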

\begin{proof}
	By the flatness of $H^p(C^\bullet)$ for $p<k$ and the vanishing of 
	$\Tor^S_1(H^0(A^\bullet),H^k(C^\bullet))$,
	for $n < k$ we have 
	\[
		\bigoplus_{i+j=n+1} \Tor^S_1(H^i(A^\bullet),H^j(C^\bullet)) = 0.
	\]
	So by Theorem \ref{thm:Kuenneth} for $n < k$ we have
	\[
		H^n(A^\bullet \otimes_S C^\bullet) \isom \bigoplus_{i+j=n} H^i(A^\bullet) \otimes_S H^j(C^\bullet)
	\]
	and for $n=k$ we obtain the short exact sequence from Theorem \ref{thm:Kuenneth} by taking the
	flatness of $H^p(C^\bullet)$ for $p < k$ and again the vanishing of $\Tor^S_1(H^0(A^\bullet),H^k(C^\bullet))$
	into account.
	
	Finally, since we assume that $H^p(A^\bullet \otimes_S C^\bullet) = 0$ for $p >k$
	by Theorem \ref{thm:Kuenneth} we also have
	\[
		\bigoplus_{i+j=n} H^i(A^\bullet) \otimes_S H^j(C^\bullet) = 0
	\]
	for all $n > k$.
\end{proof}

The following lemma is helpful to verify the assumptions of
Theorem \ref{thm:Kuenneth} respectively Corollary \ref{coro:Kuenneth}. 
\begin{lemma}
    \label{lem:AssumKuennethFlat}
    Let $C^\bullet$ be a bounded cochain complex consisting of flat $S$-modules
    and assume that $\Tor^S_j(H^i(C^\bullet),M) = 0$ for all $i$, all $j\geq 2$
    and every $S$-module $M$.
    Then the coboundary-subcomplex $B^\bullet$ of $C^\bullet$ also consists of flat $S$-modules.
\end{lemma}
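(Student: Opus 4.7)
The plan is to prove the lemma by descending induction on $n$, exploiting the boundedness of $C^\bullet$ to get a trivial base case and using the two short exact sequences that every complex produces, namely
\[
	0 \to B^n \to Z^n \to H^n(C^\bullet) \to 0 \qquad (\text{i})
\]
\[
	0 \to Z^n \to C^n \to B^{n+1} \to 0 \qquad (\text{ii})
\]
where $Z^n = \ker d^n$ and $B^n = \im d^{n-1}$. Since $C^\bullet$ is bounded, there is some $N$ with $C^n = 0$ for $n > N$, whence $B^n = 0$ (and hence flat) for $n > N$. The induction descends one degree at a time: given that $B^{n+1}$ is flat, I will show first that $Z^n$ is flat, and then that $B^n$ is flat, using the long exact sequence of $\Tor$ associated to (ii) and (i) respectively.

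The key technical fact I would use is that a flat $S$-module $F$ satisfies $\Tor^S_j(F,M) = 0$ for \emph{all} $j \geq 1$ and every $S$-module $M$, not merely for $j=1$. Applying $-\otimes_S M$ to (ii) and looking at the relevant segment
\[
	\Tor^S_2(B^{n+1},M) \longrightarrow \Tor^S_1(Z^n,M) \longrightarrow \Tor^S_1(C^n,M),
\]
the left term vanishes by the inductive hypothesis (flatness of $B^{n+1}$) and the right term vanishes because $C^n$ is flat by assumption; hence $\Tor^S_1(Z^n,M) = 0$ for every $M$, so $Z^n$ is flat. Applying $-\otimes_S M$ to (i) and looking at
\[
	\Tor^S_2(H^n(C^\bullet),M) \longrightarrow \Tor^S_1(B^n,M) \longrightarrow \Tor^S_1(Z^n,M),
\]
the left term vanishes by the standing hypothesis $\Tor^S_j(H^i(C^\bullet),M)=0$ for $j\geq 2$, and the right term vanishes by the step just completed; hence $\Tor^S_1(B^n,M) = 0$, so $B^n$ is flat. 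This closes the induction.

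There is no real obstacle: the argument is essentially a double application of the long exact sequence of $\Tor$. The one conceptual point worth stressing is \emph{why} the hypothesis involves the vanishing of $\Tor^S_j(H^i(C^\bullet),M)$ for $j \geq 2$ rather than just $j = 1$: the passage from $B^{n+1}$ to $Z^n$ through (ii) involves $\Tor^S_2(B^{n+1},M)$, which is harmless because flatness gives vanishing in all higher degrees, but the subsequent passage from $Z^n$ to $B^n$ through (i) genuinely needs $\Tor^S_2(H^n(C^\bullet),M) = 0$ in order to conclude. This is precisely the assumption provided, and it is exactly enough to make the induction go through.
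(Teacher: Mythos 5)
Your proof is correct and takes essentially the same approach as the paper: descending induction on the degree, using the two canonical short exact sequences $0 \to B^n \to Z^n \to H^n \to 0$ and $0 \to Z^n \to C^n \to B^{n+1} \to 0$ together with the long exact $\Tor$ sequences. The only cosmetic difference is that the paper tracks vanishing of $\Tor^S_j(-,M)$ for all $j\geq 1$ at once, whereas you track $j=1$ and separately invoke the fact that flatness kills all higher $\Tor$; both variants are standard and equivalent.
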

\begin{proof}
    Since $C^\bullet$ is bounded by assumption there is an $m \in \ZZ$ such that $C^i=0$ for all $i>m$.
    In particular $B^i = 0$ for all $i > m$ and is therefore flat.
    Set $H^i := H^i(C^\bullet)$.
    For each $i$ we have the following two canonical short exact sequences
    \begin{align*}
        0 \to B^i &\to Z^i \to H^i \to 0, \\ 
        0 \to Z^{i-1} &\to C^{i-1} \to B^i \to 0. 
    \end{align*}
    
    From these sequences and the associated long exact sequences in $\Tor^S_j(-,M)$, 
    for every $S$-Module $M$ we have the following exact sequences for each $j\geq 0$
    \begin{align}
        \Tor^S_{j+1}(H^i,M) &\to  \Tor^S_j(B^i,M) \to \Tor^S_j(Z^i,M), \tag{1} \label{tors1} \\
        \Tor^S_{j+1}(B^i,M) &\to  \Tor^S_j(Z^{i-1},M) \to \Tor^S_j(C^{i-1},M). \tag{2} \label{tors2} 
    \end{align}
    
    Now we do reverse induction on $i$.
    For $i=m$ we have $Z^m = C^m$, so $\Tor^S_j(Z^m,M) = 0$ for all $j\geq1$. 
    By assumption we also have $\Tor^S_{j+1}(H^m,M) = 0$ for all $j\geq1$ and by 
    (\ref{tors1}) we then have $\Tor^S_j(B^m,M) = 0$ for all $j\geq1$, that is $B^m$ is flat.
    
    Assume that $B^i$ is flat, that is $\Tor^S_j(B^i,M) = 0$ for all $j\geq1$.
    Then by (\ref{tors2}), the flatness of $C^{i-1}$ implies the flatness of $Z^{i-1}$.
    Now, from the first $\Tor$-sequence (\ref{tors1}) (exchanging $i-1$ for $i$) and 
    the vanishing of $\Tor^S_{j+1}(H^{i-1},M)$ for all $j\geq1$ we similarly see
    that $B^{i-1}$ is flat which concludes the induction.
\end{proof}

We note the following property of torsion free $S$-modules.
\begin{lemma}
	\label{lem:Tor1TorFree}
	Let $M$ be a torsion free $S$-module and $p \in S \setminus \{0\}$. Then
	\[
		\Tor^S_1(M,S_p/S) = 0.
	\]
\end{lemma}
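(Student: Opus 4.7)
My plan is to exploit the short exact sequence
\[
0 \to S \to S_p \to S_p/S \to 0
\]
obtained from the canonical inclusion $S \hookrightarrow S_p = S[1/p]$, and run the long exact sequence of $\Tor^S_\bullet(M,-)$ against it.

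The relevant piece of the long exact sequence is
\[
\Tor^S_1(M,S) \to \Tor^S_1(M,S_p) \to \Tor^S_1(M,S_p/S) \to M\otimes_S S \to M\otimes_S S_p.
\]
Now $S$ is free of rank $1$ over itself, and $S_p$ is a flat $S$-module since localization is exact, so both $\Tor^S_1(M,S)$ and $\Tor^S_1(M,S_p)$ vanish. This forces $\Tor^S_1(M,S_p/S)$ to inject into $M\otimes_S S \isom M$, and under this identification the connecting map becomes the canonical localization map $M \to M_p$.

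It then remains to observe that the localization map $M \to M_p$ is injective. Its kernel consists of elements of $M$ annihilated by some power of $p$, but since $S$ is a domain and $p \neq 0$, multiplication by $p^k$ on the torsion free $S$-module $M$ is injective for every $k\geq 0$. Hence the kernel is zero and we conclude $\Tor^S_1(M,S_p/S) = 0$.

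I do not anticipate a real obstacle here; the only point that needs a moment's care is identifying the connecting map with the localization map $M \to M_p$, which follows from functoriality of $M\otimes_S -$ applied to the inclusion $S \hookrightarrow S_p$.
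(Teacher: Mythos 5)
Your proof is correct, and it is slightly more self-contained than the paper's. Both arguments ultimately reduce to the same key point, namely that the localization map $M \to M_p$ is injective, which via the long exact $\Tor$ sequence for $0 \to S \to S_p \to S_p/S \to 0$ forces $\Tor^S_1(M,S_p/S)=0$. The difference is in how that injectivity is obtained: the paper quotes a general lemma from Rotman stating that torsion-freeness of $M$ is equivalent to $\Tor^S_1(M,Q(S)/S)=0$, i.e.\ to injectivity of $M \to M\otimes_S Q(S)$, and then factors this map as $M \to M_p \to M\otimes_S Q(S)$ to deduce injectivity of the first arrow. You instead prove the injectivity of $M \to M_p$ directly: an element of the kernel is killed by some $p^k$, and since $S$ is a domain, $p^k\neq 0$, so torsion-freeness of $M$ forces the element to be zero. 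Your route avoids the citation and the factorization step, at the mild cost of writing out the long exact sequence and identifying the connecting map; the paper's route leans on a ready-made equivalence. Both are perfectly valid, and yours is arguably the cleaner one for a reader who does not have Rotman's Lemma 7.11 at hand.
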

\begin{proof}
	Let $Q(S)$ be the quotient field of $S$.
	By \cite[Lem.~7.11]{Rot09_IntroHomAlg} for the torsion free $S$-module $M$ we have
	$\Tor^S_1(M,Q(S)/S) = 0$ which is equivalent to the injectivity of the localization map 
	$f:M = M \otimes S \to M \otimes_S Q(S)$.
	Now, the map $f$ apparently factors through the localization map $g:M \otimes_S S \to M \otimes_S S_p$ 
	and thus $g$ is also injective which in turn implies $\Tor^S_1(M,S_p/S) = 0$.
\end{proof}


\subsection{Sheaves}
\label{ssect:Recoll_Sheaves}

For basics about sheaves and their cohomology we refer to \cite[Ch.~II, III]{Har77_AlgGeom} and \cite[Ch.~5.4, 6.3]{Rot09_IntroHomAlg}.


\subsubsection{Sheaf cohomology}

Let $\Fs$ be a sheaf of abelian groups ($S$-mod\-ules) on the topological space $\Xf$.
The cohomology groups ($S$-modules) of $\Fs$ are defined as the images of the right derived functors
of the global sections functor $\Gamma(\Xf,-):\Sh(\Xf) \to \Ab$ ($\Mod_S$), $\Fs \mapsto \Fs(\Xf)$, that is
\[
    H^n(\Xf,\Fs) 
    	= \Rder^n\Gamma(\Xf,\Fs).
\]

A sheaf $\Gs$ is called \emph{acyclic} if $H^n(\Xf,\Gs) = 0$ for all $n>0$.
An acyclic resolution $\Gs^\bullet$ of $\Fs$ is an exact sequence of sheaves of abelian groups ($S$-modules) on $\Xf$ 
\[
    \Fs \to \Gs^0 \to \Gs^1 \to \Gs^2 \to \ldots
\]
where $\Gs^i$ is acyclic for each $i\geq0$.
Applying the global sections functor to an acyclic resolution yields 
a cochain complex of abelian groups ($S$-modules) $A^\bullet = \Gamma(\Xf,\Gs^\bullet)$ 
whose cohomology computes the sheaf cohomology of $\Fs$: 
\[
    H^n(\Xf,\Fs) \isom H^n(A^\bullet),
\]
cf.\ \cite[Ch.~6]{Rot09_IntroHomAlg}.


\subsubsection{\v{C}ech cohomology}
\label{ssec:Recoll_CechCohom}

Let $\Fs$ be a sheaf of abelian groups on a topological space $\Xf$ and let
$\Uc = \{U_i \mid i \in I\}$ be an open cover of $\Xf$.
Fix a linear order on the index set $I$.

The \emph{\v{C}ech complex} $C^\bullet(\Uc,\Fs)$ is defined as follows.
Set 
\[
    U_{i_0,\ldots,i_n} := U_{i_0}\cap\ldots\cap U_{i_n}.
\]
The terms are
\[
    C^n(\Uc,\Fs) = \prod_{i_0<i_1<\ldots<i_n} \Fs(U_{i_0,\ldots,i_n})
\]
and the coboundary maps are given by
\begin{align*}
    d^n(\alpha)_{i_0<\ldots<i_{n+1}} = 
    \sum_{k=0}^{n+1} (-1)^k \rho|^{U_{i_0,\ldots,\widehat{i_k},\ldots,i_{n+1}}}_{U_{i_0,\ldots,i_{n+1}}}(\alpha_{i_0<\ldots<\widehat{i_k}<\ldots<i_{n+1}}). %
\end{align*}

Then the \v{C}ech cohomology groups (modules) are
\[
    \check{H}^n(\Uc,\Fs) = H^n(C^\bullet(\Uc,\Fs)).
\]
 
The sheaf version of the \v{C}ech complex is defined as follows.
If $j:U_{i_0,\ldots,i_n} \hookrightarrow \Xf$ is the inclusion, define
\[
    \Cs^n(\Uc,\Fs) = \prod_{i_0<i_1<\ldots<i_n} j_*(\Fs|_{U_{i_0,\ldots,i_n}}).
\]
The coboundary maps $d^n$ are defined by the same formula as above.
By \cite[Lem.~III.4.2]{Har77_AlgGeom} we have a resolution $\Fs \to \Cs^\bullet(\Uc,\Fs)$ of $\Fs$.
It is not acyclic in general.

\begin{definition}
    \label{def:LerayCover}
    Suppose we have 
    $H^k(U_{i_0,\ldots,i_n},\Fs|_{U_{i_0,\ldots,i_n}}) = 0$ for all
    finite intersections $U_{i_0,\ldots,i_n}$ of open subsets in $\Uc$ and all $k>0$.
    Then $\Uc$ is called a \emph{Leray cover} for $\Fs$.
\end{definition}

Provided the right setting, the \v{C}ech complex computes sheaf cohomology
by the following classical result due to Leray,
see e.g.\ \cite[Thm.~10.79]{Rot09_IntroHomAlg}. 

\begin{theorem}
    \label{thm:LerayCechCohom}
    If\, $\Uc$ is a Leray cover
    then $\check{H}^n(\Uc,\Fs) \isom H^n(\Xf,\Fs)$ for all $n\geq 0$.
\end{theorem}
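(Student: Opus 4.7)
The plan is to compare \v{C}ech cohomology and sheaf cohomology via a double complex built from an injective resolution, extracting the isomorphism from the two associated spectral sequences. I would fix an injective resolution $\Fs \to \mathscr{I}^\bullet$ in the category of sheaves of abelian groups on $\Xf$ and form the first-quadrant double complex
\[
	K^{p,q} := C^p(\Uc, \mathscr{I}^q) = \prod_{i_0 < \cdots < i_p} \mathscr{I}^q(U_{i_0,\ldots,i_p}),
\]
with horizontal differentials given by the \v{C}ech coboundary and vertical ones induced from $\mathscr{I}^\bullet$. Both standard filtrations of $\Tot(K^{\bullet,\bullet})^\bullet$ yield spectral sequences converging to the same total cohomology, and the isomorphism will come from identifying the $E_2$-page of each.

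Next, I would filter by columns, taking vertical cohomology first. Since each $\mathscr{I}^q$ is injective, hence flasque, its restriction to any open set $U_{i_0,\ldots,i_p}$ remains flasque, so $\mathscr{I}^\bullet|_{U_{i_0,\ldots,i_p}}$ is a flasque (hence acyclic) resolution of $\Fs|_{U_{i_0,\ldots,i_p}}$. The vertical cohomology at position $(p,q)$ is therefore $\prod_{i_0<\cdots<i_p} H^q(U_{i_0,\ldots,i_p}, \Fs|_{U_{i_0,\ldots,i_p}})$, which vanishes for $q>0$ by the Leray hypothesis and equals $C^p(\Uc, \Fs)$ for $q=0$. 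The spectral sequence thus collapses onto the $q=0$ row and converges to $\check{H}^n(\Uc, \Fs)$.

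Filtering by rows instead, the $q$-th row is the \v{C}ech complex $C^\bullet(\Uc, \mathscr{I}^q)$ of a flasque sheaf. The key computation is that $\check{H}^p(\Uc, \Gs)=0$ for $p>0$ and $\check{H}^0(\Uc, \Gs)=\Gamma(\Xf, \Gs)$ whenever $\Gs$ is flasque. This I would deduce by noting that the augmented sheafified \v{C}ech complex $0 \to \Gs \to \Cs^\bullet(\Uc, \Gs)$ is a resolution by the already-cited result of Hartshorne, that flasqueness is preserved by restriction to opens, direct image along open inclusions, and products, so each $\Cs^p(\Uc, \Gs)$ is flasque, and that $\Gamma(\Xf, -)$ is exact on short exact sequences of flasque sheaves. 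Consequently only the $p=0$ column survives, giving $E_2^{0,q} = H^q(\Xf, \Fs)$, and the total cohomology equals $H^n(\Xf, \Fs)$.

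Comparing the two computations then yields $\check{H}^n(\Uc, \Fs) \isom H^n(\Xf, \Fs)$ for every $n \geq 0$. The main obstacle is the flasque-\v{C}ech vanishing used in the second spectral sequence: it is the one ingredient that is not formal, and it requires the technical verification that finite intersections, open direct images, and products each preserve flasqueness together with the exactness of $\Gamma(\Xf,-)$ on flasque short exact sequences. Once that is in place, both convergences reduce to the standard two-spectral-sequence formalism for a bounded first-quadrant double complex.
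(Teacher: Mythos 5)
Your proof is correct and is the standard Cartan--Leray double-complex argument: form the bicomplex $K^{p,q} = C^p(\Uc, \mathscr{I}^q)$ from an injective (hence flasque) resolution of $\Fs$ and compare the two spectral sequences, using flasqueness to collapse each one onto a single row or column. The paper does not prove this statement itself but cites it from Rotman (Theorem 10.79); your argument is the usual textbook proof and supplies exactly what the citation takes for granted.
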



\section{Sheaves on Posets}
\label{sec:PosetSheaves}

Let $(P,\leq)$ be a finite poset. The point set of the finite topological space associated to $P$ (also denoted by $P$)
is given by the ground set of $P$.
A topology on $P$ consists of the open subsets which are increasing subsets, i.e.\
$U \subseteq P$ is open if for all $x \in U$ and $y \in P$ with $x\leq y$ we have $y \in U$.
Finite topological spaces of this kind were first studied by Alexandroff \cite{Ale37_DiskRaeume} 
and the topology on $P$ just described is called the \emph{Alexandroff topology}.

\begin{definition}
    \label{def:PrinOpen}
    For $x \in P$ an open subset of the form $U_x := \{y \in P \mid x\leq y\}$ is called \emph{principal}.
    The open cover $\Uc_P := \{U_x \mid x \in P \}$ is called the \emph{principal open cover} of $P$.
\end{definition}

Recall that a poset $P$ can be regarded as a small category with objects the elements of $P$ and
exactly one morphism $x \to y$ for each $x \leq y$.
Every covariant functor $F:P \to \Ab$ ($\Mod_S$) gives rise to a sheaf $\Fs$ of abelian groups ($S$-modules)
on the associated finite topological space as follows. 
The principal open subsets $U_x = \{y \in P \mid x\leq y\}$ form a basis for the topology on $P$.
We define the sections on these principal open sets as
\[
    \Fs(U_x) := F(x)
\]
with restriction maps
\[
    \rho|^{U_x}_{U_y} := F(x\leq y): \Fs(U_x) \to \Fs(U_y).
\]
We leave it to the reader to verify that this extends uniquely to a sheaf $\Fs$ on 
the Alexandroff space $P$, cf.\ \cite{Bac1975_WhitneyNumbers}.
For the stalks $\Fs_x$ we have
\[
    \Fs_x = \varinjlim_{x \in U \underset{\text{open}}{\subseteq} P} \Fs(U) = \Fs(U_x) = F(x).
\]

Recall that a join-semilattice is a poset $L$ where every pair of elements $x,y \in L$
has a least upper bound denoted by $x\vee y = \inf\{z \in L \mid z\geq x$ and $z\geq y\}$.
We note the following.

\begin{remark}
    \label{rem:IntersectLattice}
    Let $L$ be a join-semilattice. Then for two principal open subsets
    $U_x,U_y \subseteq L$ we have
        $U_x \cap U_y = U_{x\vee y}$.
\end{remark}

\bigskip

We now discuss the cohomology of sheaves on posets.
We recall the following fundamental lemma.

\begin{lemma}[{\cite[Lem.~1.1]{Yuz91_LatticeCohom}}]
    \label{lem:PosetCohomMin}
    Let $P$ be a finite poset with a unique minimal element and $\Fs$ a sheaf of abelian groups ($S$-modules) on $P$.
    Then $\Fs$ is acyclic.
\end{lemma}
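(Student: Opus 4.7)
The plan is to exploit the fact that on an Alexandroff space, the principal open sets are the smallest opens containing each point, which makes stalks exceptionally well-behaved. Specifically, for the poset $P$ with unique minimal element $m$, the principal open $U_m = \{y \in P \mid m \leq y\}$ coincides with the whole space $P$. This single observation is the heart of the argument.

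From the identification $U_m = P$ and the stalk formula recorded earlier in the excerpt, namely $\Fs_x = \varinjlim_{x \in U} \Fs(U) = \Fs(U_x)$, we obtain the chain of equalities
\[
\Gamma(P,\Fs) \;=\; \Fs(P) \;=\; \Fs(U_m) \;=\; \Fs_m.
\]
Thus the global sections functor $\Gamma(P,-) \colon \Sh(P) \to \Ab$ (or $\Mod_S$) agrees with the stalk functor at $m$.

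Next I would invoke the standard fact that on any topological space the stalk functor at a point is exact. (On an Alexandroff space this is particularly transparent: a sequence of sheaves on $P$ is exact if and only if it is exact at every stalk, which here means exact as a sequence of functors $P \to \Ab$ evaluated pointwise; in particular, evaluation at $m$ preserves short exact sequences.) Consequently $\Gamma(P,-)$ is an exact functor.

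Finally, since the sheaf cohomology groups are by definition the right derived functors $H^n(P,\Fs) = \Rder^n\Gamma(P,\Fs)$, and since an exact functor has vanishing right derived functors in positive degrees, we conclude that $H^n(P,\Fs) = 0$ for all $n > 0$, i.e.\ $\Fs$ is acyclic. There is no real obstacle here; the only substantive input is the Alexandroff stalk formula, which the excerpt has already supplied, and the elementary fact that an exact additive functor has trivial higher derived functors.
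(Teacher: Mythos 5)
Your proof is correct. The paper itself does not reproduce the proof (it cites Yuzvinsky's Lemma~1.1 directly), but your argument is the standard one and is essentially Yuzvinsky's: the one point you pass over lightly---that a \emph{finite} poset with a unique minimal element $m$ in fact has $m$ as a least element, so that $U_m = P$---does rely on finiteness, but once that identification is in place, $\Gamma(P,-)$ coincides with the exact stalk functor $(-)_m$, hence is exact, and $\Rder^n\Gamma(P,-) = 0$ for $n>0$ follows at once.
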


As a direct consequence of the previous lemma and Remark \ref{rem:IntersectLattice} we obtain the following.

\begin{corollary}
    \label{coro:LerayPoset}
    Let $L$ be a finite  join-semilattice. The principal open cover $\Uc := \{ U_x \mid x \in L \}$ is
    a Leray cover for any sheaf on $L$.
\end{corollary}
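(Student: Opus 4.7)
My plan is to reduce the statement to Lemma \ref{lem:PosetCohomMin} by identifying every finite intersection of principal opens as itself a principal open.

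By definition of a Leray cover, I must verify that $H^k(U_{x_0,\ldots,x_n},\Fs|_{U_{x_0,\ldots,x_n}}) = 0$ for every $k>0$ and every finite intersection $U_{x_0,\ldots,x_n} = U_{x_0}\cap \cdots \cap U_{x_n}$ of opens in $\Uc$. Iterating Remark \ref{rem:IntersectLattice} gives
\[
    U_{x_0}\cap \cdots \cap U_{x_n} = U_{x_0 \vee \cdots \vee x_n} = U_z,
\]
where $z := x_0 \vee \cdots \vee x_n$ exists because $L$ is a finite join-semilattice. Thus the problem reduces to showing $H^k(U_z,\Fs|_{U_z}) = 0$ for every $z \in L$ and every $k>0$.

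Next I would note that the Alexandroff topology on $L$ restricted to $U_z$ agrees with the intrinsic Alexandroff topology on $U_z$ regarded as a subposet of $L$: upward-closed subsets of $L$ contained in $U_z$ are exactly the upward-closed subsets of the subposet $U_z$. Under this identification, the restricted sheaf $\Fs|_{U_z}$ is the sheaf on the subposet $U_z$ associated to the restricted functor $F|_{U_z}$, where $F:L \to \Ab$ is the functor inducing $\Fs$. Since $z$ is a unique minimal element of $U_z$, Lemma \ref{lem:PosetCohomMin} immediately delivers the required acyclicity and the proof is complete.

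There is no serious obstacle here; the two minor points to verify are the compatibility between subspace and intrinsic Alexandroff topologies just mentioned, and the fact that the poset-sheaf construction is compatible with restriction to upward-closed subposets. Both are immediate from the definitions of the Alexandroff topology and of $\Fs$ via the principal-open basis.
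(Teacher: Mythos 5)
Your proof is correct and follows exactly the route the paper intends: Remark \ref{rem:IntersectLattice} identifies each finite intersection of principal opens as the principal open $U_z$ with $z$ the join, and Lemma \ref{lem:PosetCohomMin} applied to the subposet $U_z$ (which has unique minimal element $z$) gives the required vanishing. The extra care you take to check that the subspace topology on $U_z$ agrees with its intrinsic Alexandroff topology and that $\Fs|_{U_z}$ is the sheaf of the restricted functor is a worthwhile explicit justification of a step the paper leaves implicit.
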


\begin{lemma}
    \label{lem:OpenIncl}
    Let $\Fs$ be a sheaf on a finite join-semilattice $L$, $U=U_x \subseteq L$ a principal open subset and $i:U \to L$ the inclusion.
    Then 
    \[
    	H^n(U,\Fs|_U) \isom H^n(L,i_*(\Fs|_U))
    \]
    for each $n \geq 0$, 
    in particular, $i_*(\Fs|_U)$ is acyclic.
\end{lemma}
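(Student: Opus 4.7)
The plan is to identify the composition $\Gamma(L,-)\circ i_*$ on $\Sh(U)$ with $\Gamma(U,-)$ and deduce the isomorphism from a Grothendieck spectral sequence. Since $i:U\hookrightarrow L$ is an open inclusion, the restriction functor $i^{-1}$ is exact, so its right adjoint $i_*$ preserves injectives. The Grothendieck composition-of-functors spectral sequence then reads
\[
    E_2^{p,q}=H^p(L,R^q i_*(\Fs|_U))\Longrightarrow H^{p+q}(U,\Fs|_U).
\]

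The main step is to show that $R^q i_*(\Fs|_U)=0$ for every $q>0$. I would verify this at stalks. On the Alexandroff space $L$, the principal open $U_y$ is the smallest open neighborhood of $y$, so the filtered colimit defining the stalk of the sheafification stabilizes on $U_y$, giving
\[
    (R^q i_*(\Fs|_U))_y\isom H^q\bigl(U_y\cap U_x,\,\Fs|_{U_y\cap U_x}\bigr).
\]
By Remark \ref{rem:IntersectLattice}, $U_y\cap U_x=U_{x\vee y}$, which is a principal open subset of $L$ whose associated poset has unique minimum $x\vee y$. Lemma \ref{lem:PosetCohomMin} applied to $U_{x\vee y}$ then forces this cohomology to vanish for $q>0$. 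Consequently the spectral sequence degenerates at $E_2$ and yields the claimed isomorphism $H^n(L,i_*(\Fs|_U))\isom H^n(U,\Fs|_U)$ for all $n\geq 0$.

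The in-particular statement comes for free: since $U=U_x$ has the unique minimum $x$, another application of Lemma \ref{lem:PosetCohomMin} gives $H^n(U,\Fs|_U)=0$ for $n>0$, and transporting along the isomorphism shows that $i_*(\Fs|_U)$ is acyclic on $L$. The only mildly delicate point is the stalk computation for $R^q i_*$, but this is painless on Alexandroff spaces precisely because $U_y$ is already the smallest open containing $y$, so no non-trivial colimit intervenes. A fully equivalent route, avoiding spectral sequences, is to choose any flabby (e.g.\ injective) resolution $\Fs|_U\to\Gs^\bullet$ on $U$ and check directly that $i_*\Gs^\bullet$ is an acyclic resolution of $i_*(\Fs|_U)$ on $L$: exactness reduces to the same vanishing via stalks, each $i_*\Gs^p$ remains flabby/injective, and $\Gamma(L,i_*\Gs^\bullet)=\Gamma(U,\Gs^\bullet)$ computes both cohomologies.
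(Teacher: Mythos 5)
Your proof is correct and takes essentially the same approach as the paper: show that $R^q i_*(\Fs|_U)$ vanishes for $q>0$ by evaluating on (equivalently, taking stalks at) principal opens, using Remark~\ref{rem:IntersectLattice} to see $U_y\cap U_x=U_{x\vee y}$ and Lemma~\ref{lem:PosetCohomMin} to kill the cohomology, and then conclude via the degenerate Leray/Grothendieck spectral sequence. The paper packages the final step by citing \cite[Ex.~III.8.1]{Har77_AlgGeom} rather than writing the spectral sequence out, but the argument is the same.
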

\begin{proof}
    Note that for a principal open subset $W=U_y \subseteq L$ by Remark \ref{rem:IntersectLattice}
    $W \cap U = U_{x\vee y}$ is also a principal open subset.
    Thus, by \cite[Prop.~III.8.1]{Har77_AlgGeom} and Lemma \ref{lem:PosetCohomMin} 
    for the higher direct image sheaves (that is the right derived functors of the direct image functor) we have
    \[
        \Rder^ki_*(\Fs) (W) = H^k(W\cap U, \Fs|_{W\cap U}) = 0
    \] 
    for $k>0$ and for each $W$ in the principal open cover of $L$.
    Hence $\Rder^ki_*(\Fs) \equiv 0$ for all $k>0$
    and the isomorphism of the cohomology groups follows with \cite[Ex.~III.8.1]{Har77_AlgGeom}.
\end{proof}

\bigskip

Now let $\Xf$ be a topological space and assume that we have a finite open cover $\Uc$ of $\Xf$
which is closed under taking intersections, that is $U\cap U' \in \Uc$ for all $U,U' \in \Uc$.
Suppose further that $\Fs$ is a sheaf of abelian groups ($S$-modules) on $\Xf$.
In this case, we can associate a finite poset $(P_\Uc,\leq)$ to $\Uc$ 
with elements the open sets contained in $\Uc$ and order relation given by reverse inclusion.
In this setting $\Fs$ induces a functor $P_\Uc\to \Ab$ ($\Mod_S$) and hence also a sheaf $\Fs_P$
of abelian groups ($S$-modules) on $P_\Uc$.

\begin{lemma}
    \label{lem:PosetLerayCohom}
    Suppose $\Uc$ is a finite Leray cover for $\Fs$ which is also closed under taking intersections.
    Then 
    \[
        H^n(\Xf,\Fs) \isom H^n(P_\Uc,\Fs_P)
    \]
    for each $n\geq0$.
\end{lemma}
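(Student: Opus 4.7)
My plan is to express both cohomologies as \v{C}ech cohomology and then identify the two \v{C}ech complexes term by term. The first observation is that, since $\Uc$ is closed under intersections and $P_\Uc$ carries the reverse-inclusion order, $P_\Uc$ is a finite join-semilattice: for $U,V \in \Uc$ regarded as elements of $P_\Uc$ we have $U\vee V = U\cap V$. Hence by Corollary~\ref{coro:LerayPoset}, the principal open cover $\Uc_{P_\Uc} = \{U_x \mid x \in P_\Uc\}$ is a Leray cover for $\Fs_P$. Combining this with Theorem~\ref{thm:LerayCechCohom} on both sides gives
\[
    H^n(\Xf,\Fs) \isom \check{H}^n(\Uc,\Fs) \quad\text{and}\quad H^n(P_\Uc,\Fs_P) \isom \check{H}^n(\Uc_{P_\Uc},\Fs_P).
\]

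Next I would construct an isomorphism of cochain complexes $C^\bullet(\Uc,\Fs) \isom C^\bullet(\Uc_{P_\Uc},\Fs_P)$. Fix a linear order on $\Uc$ and transport it to $P_\Uc$ via the tautological bijection. For any chain $U_0 < \cdots < U_n$, iterating Remark~\ref{rem:IntersectLattice} in the join-semilattice $P_\Uc$ yields
\[
    U_{U_0} \cap \cdots \cap U_{U_n} \;=\; U_{U_0\vee\cdots\vee U_n} \;=\; U_{U_0 \cap \cdots \cap U_n},
\]
so the corresponding term of $C^n(\Uc_{P_\Uc},\Fs_P)$ is $\Fs_P(U_{U_0\cap\cdots\cap U_n}) = \Fs(U_0\cap\cdots\cap U_n)$, matching the corresponding term of $C^n(\Uc,\Fs)$. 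By construction, the restriction maps of $\Fs_P$ between principal opens are the restriction maps of $\Fs$ between the underlying open sets in $\Xf$, so the \v{C}ech coboundary formulas agree summand by summand.

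The only real difficulty is notational: one must juggle three distinct uses of the symbol $U$, namely an open set $U \in \Uc \subseteq \Xf$, the same object viewed as an element $U \in P_\Uc$, and the Alexandroff principal open $U_x \subseteq P_\Uc$ attached to $x \in P_\Uc$. Once these are cleanly separated and the order on $\Uc$ is pulled back to $P_\Uc$ through the bijection, the identification of the two \v{C}ech complexes is purely formal, and composing with the two Leray isomorphisms above delivers the stated isomorphism $H^n(\Xf,\Fs) \isom H^n(P_\Uc,\Fs_P)$.
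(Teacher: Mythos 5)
Your proof is correct and is essentially the paper's proof: the paper also identifies the two \v{C}ech complexes term by term (writing ``We apparently have $C^n(\Uc,\Fs) = C^n(\Uc',\Fs_P)$''), notes that $P_\Uc$ is a join-semilattice so that Corollary~\ref{coro:LerayPoset} makes the principal open cover Leray, and then invokes Theorem~\ref{thm:LerayCechCohom} on both sides. You merely spell out, via Remark~\ref{rem:IntersectLattice} and the definition of $\Fs_P$, the details the paper compresses into ``apparently.''
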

\begin{proof}
    We consider the principal open cover $\Uc'$ of $P_\Uc$.
    Recall the definition of the terms in the \v{C}ech complexes
    of $\Uc$ respectively $\Uc'$. We apparently have
    \[
        C^n(\Uc,\Fs) = C^n(\Uc',\Fs_P)
    \]
    for all $n$.
    Note that $P_\Uc$ is a join-semilattice with $U \vee W = U \cap W$. 
    The cover $\Uc$ is Leray by assumption and the principal cover $\Uc'$ of $P_\Uc$ is
    Leray by Corollary \ref{coro:LerayPoset}. 
    Hence, we obtain the isomorphism of the cohomology groups
    by Theorem \ref{thm:LerayCechCohom}.
\end{proof}

Note that the Alexandroff space of a finite poset $P$ is a noetherian topological space
of dimension $\dim(P) = d$ the maximal length of a maximal chain $x_0 < x_1 < \cdots < x_d$ in $P$.
By Grothendieck's vanishing theorem for the cohomology of sheaves on noetherian spaces \cite[Ch.~3.6]{Gro57_Tohoku} we have the following.

\begin{theorem}
	\label{thm:CohomPosetMax}
	For a finite poset $P$ and for any sheaf $\Fs$ of abelian groups ($S$-modules) on $P$
	we have $H^i(P,\Fs) = 0$ for all $i > \dim(P)$.
\end{theorem}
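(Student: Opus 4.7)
The plan is to reduce the statement directly to Grothendieck's vanishing theorem for sheaves on noetherian topological spaces, as indicated by the citation. For this I need to verify two points about the Alexandroff space associated to $P$: that it is noetherian, and that its Krull dimension as a topological space coincides with the combinatorial dimension $\dim(P)$ defined via maximal chains in the statement.

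Noetherianity is immediate: since $P$ is finite, there are only finitely many closed subsets (namely the downward-closed subsets of $P$, being complements of the open increasing subsets), so every descending chain of closed subsets must stabilize. This also makes the space quasi-compact.

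The main step is the identification of the topological dimension. First I would describe the irreducible closed subsets explicitly. Any closed $Z \subseteq P$ with maximal elements $x_1, \ldots, x_k$ decomposes as $Z = \bigcup_{i=1}^{k} \overline{\{x_i\}}$, where $\overline{\{x\}} = \{y \in P \mid y \leq x\}$ is the principal downset. Hence $Z$ is irreducible if and only if $k=1$, i.e.\ $Z = \overline{\{x\}}$ for some $x \in P$. This yields a bijection between irreducible closed subsets of the Alexandroff space $P$ and elements of $P$, under which $\overline{\{x\}} \subsetneq \overline{\{y\}}$ corresponds to $x < y$. A chain of irreducible closed subsets
\[
\overline{\{x_0\}} \subsetneq \overline{\{x_1\}} \subsetneq \cdots \subsetneq \overline{\{x_d\}}
\]
thus translates to a chain $x_0 < x_1 < \cdots < x_d$ in $P$, and conversely. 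Taking suprema over chain lengths, the Krull dimension of the Alexandroff space equals $\dim(P)$.

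With these two facts in hand, Grothendieck's vanishing theorem \cite[Ch.~3.6]{Gro57_Tohoku} applies and gives $H^i(P,\Fs) = 0$ for all $i > \dim(P)$ and any sheaf $\Fs$ of abelian groups (or $S$-modules) on $P$. The only nontrivial part of the argument is the combinatorial description of the irreducible closed subsets; everything else is formal.
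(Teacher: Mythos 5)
Your proposal is correct and follows exactly the route the paper takes: the paper simply asserts that the Alexandroff space of a finite poset is noetherian of topological dimension equal to the longest chain length, and then invokes Grothendieck's vanishing theorem from the T\^ohoku paper. You have merely supplied the (straightforward but omitted) verification that irreducible closed subsets are the principal downsets $\overline{\{x\}}$ and that chains of these correspond to chains in $P$, which is accurate.
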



\section{Sheaves on the intersection lattice}
\label{sec:LASheaves}

Recall that we set $L_0 := ( L(\Ac) \setminus \{ T(\Ac) \} )^{\op}$ so the order relation of $L_0$ is inclusion.
The sheaf on $L_0$ associated to an arrangement considered by Yuzvinsky is defined as follows.
\begin{definition}[{\cite{Yuz91_LatticeCohom}}]
    \label{def:YuzSheaf}
    Denote by $\Ds$ the sheaf associated to the functor $L_0\to \Mod_S, X \mapsto D(\Ac_X)$ given by $\Ds(U_X) := D(\Ac_X)$
    for a principal open subset $U_X = \{ Y \in L_0 \mid X \subseteq Y\} \subseteq L_0$.
    The restriction maps $\Ds(U_Y) \to \Ds(U_X)$ are given by the inclusions $D(\Ac_Y) \subseteq D(\Ac_X)$ for $Y \subseteq X$.
\end{definition}

Note that, since $L(\Ac)$ is a lattice, $L_0$ is a join-semilattice
and 
\[
    X \wedge_{L(\Ac)} Y = X \vee_{L_0} Y.
\]

\bigskip

From now on until the end of this note let $\Xf = \Spec S \setminus \{\mf\}$ 
be the punctured spectrum.

Since we assume the arrangement $\Ac$ to be essential, associated to $\Ac$ is the following affine open cover of $\Xf$
which is in particular Leray for any coherent sheaf on $\Xf$ by a classical result due to Serre \cite{Ser57_CohomAlgVar}.

\begin{definition}
    \label{def:AffCoverA}
    Recall that $Q(X)  = \prod_{H \in \Ac\setminus\Ac_X}\alpha_H = Q(\Ac\setminus \Ac_X)$.
    Define the open cover
    \[
        \Uc_\Ac := \{ U(X) := \Xf \setminus V(Q(X)) \mid X \in L_0 \}
    \]
    of $\Xf$ associated to $\Ac$.
\end{definition}

Note that $U(X) \cap U(Y) = U(X\vee Y)$ for all $X,Y \in L_0$, i.e.\ 
$\Uc_\Ac$ is closed under taking intersections.
We obtain a poset $P_{\Uc_\Ac}$ with order relation given by reverse inclusion
as discussed in Section \ref{sec:PosetSheaves}.

We further have $U(X) \leq U(Y)$ if and only if $X \subseteq Y$, thus the map
\begin{align*}
    L_0 &\to P_{\Uc_\Ac} \\
        X &\mapsto U(X)
\end{align*}
is a poset isomorphism.
If $\Fs$ is a coherent sheaf on $\Xf$, from the general discussion in Section \ref{sec:PosetSheaves}
and the poset isomorphism above, $\Fs$ also defines a sheaf on $L_0$.

\begin{definition}
    \label{def:StrucSheaf}
    (i)
    Let $\Os_\Xf$ be the structure sheaf of the punctured affine space.
    This defines as discussed above a sheaf of $S$-modules on $L_0$ which we denote by $\Os_{L_0}$ with
    \[
    	\Os_{L_0}(U_X) = S_{Q(X)} \quad (X \in L_0).
    \]
    
    (ii)
    Let $\wt{D}|_\Xf$ be the coherent sheaf on $\Xf$ associated to the derivation module $D =D(\Ac)$.
    This induces a sheaf of $S$-modules on $L_0$ denoted by $\wt{D}_{L_0}$ with
    \[
    	\wt{D}_{L_0}(U_X) = D_{Q(X)} \quad (X \in L_0).
    \]
\end{definition}

The tensor product of two sheaves $\Fs, \Gs$ of $S$-modules on a finite poset $P$ is given by
\[
    (\Fs\otimes_S\Gs)(U) := \Fs(U) \otimes_S \Gs(U)
\]
for all $U \subseteq P$ open with restriction maps 
the tensor product of the restriction maps of $\Fs$ and $\Gs$ 
(it suffices to define this for principal open subsets).
A sheaf $\Fs$ of $S$-modules on $L_0$ is flat if $-\otimes_S \Fs$ yields an exact functor.
This is the case if and only if $\Fs_X = \Fs(U_X)$ is a flat $S$-module for all $X \in L_0$.

\begin{remark}
    \label{rem:StrucSheafFlat}
    As $\Os_{L_0}(U_X) = S_{Q(X)}$ is a localization, it is flat for all $X \in L_0$.
    In particular, $\Os_{L_0}$ is a flat sheaf and for the principal open cover $\Uc_{L_0}$ of $L_0$
    all terms in the complex of sheaves $\Cs^\bullet(\Uc_{L_0},\Os_{L_0})$ are flat as they are
    finite direct products of the flat sheaves ${i}_*(\Os_{L_0}|_{U_X})$ where $i:U_X \to L_0$
    is the inclusion of the open subset $U_X$.
\end{remark}

The next result gives the cohomology of the structure sheaf.
\begin{proposition}
    \label{prop:CohomStructSheaf}
    With the notation as above we have
    \[
        H^n(L_0,\Os_{L_0}) \isom H^n(\Xf,\Os_\Xf) = 
            \begin{cases}
                S, \quad &n=0, \\
                S_{x_1x_2\cdots{x_\ell}} / S, \quad &n=\ell-1, \\
                0, \quad &\text{else}.
            \end{cases}
    \]
\end{proposition}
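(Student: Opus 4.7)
The plan is to split the proof into two independent stages. First, I would establish the isomorphism $H^n(L_0, \Os_{L_0}) \isom H^n(\Xf, \Os_\Xf)$ by applying Lemma \ref{lem:PosetLerayCohom} to the cover $\Uc_\Ac$. Second, I would compute $H^n(\Xf, \Os_\Xf)$ directly from the local cohomology of the polynomial ring $S$, using Remark \ref{rem:LocalCohom}.

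For the first stage I would verify the hypotheses of Lemma \ref{lem:PosetLerayCohom} for $\Uc_\Ac = \{U(X) \mid X \in L_0\}$ with $\Fs = \Os_\Xf$. Closure under intersections is already recorded as $U(X) \cap U(Y) = U(X \vee Y)$. That $\Uc_\Ac$ actually covers $\Xf$ follows by taking, for any $p \in \Xf$, the flat $X := \bigcap\{H \in \Ac \mid p \in H\} \in L_0$; by minimality $\alpha_H(p) \neq 0$ for every $H \in \Ac \setminus \Ac_X$, so $Q(X)(p) \neq 0$ and $p \in U(X)$. Each member $U(X)$ coincides with the principal affine open $\Spec S_{Q(X)}$ of $\Spec S$, because $Q(X)$ is a homogeneous polynomial of positive degree, so $\mf \in V(Q(X))$ and the puncture is automatically removed. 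Serre's vanishing on affine schemes then makes $\Uc_\Ac$ a Leray cover for the coherent sheaf $\Os_\Xf$. Finally, the assignment $X \mapsto U(X)$ is an order isomorphism $L_0 \to P_{\Uc_\Ac}$: indeed $X \subseteq Y$ is equivalent to $\Ac_Y \subseteq \Ac_X$, which is in turn equivalent to $U(Y) \subseteq U(X)$. Under this identification $\Os_\Xf$ restricts on $\Uc_\Ac$ to precisely the sheaf $\Os_{L_0}$ of Definition \ref{def:StrucSheaf}, since both assign $S_{Q(X)}$ to $U_X$ with the natural localization maps, and Lemma \ref{lem:PosetLerayCohom} then delivers $H^n(L_0, \Os_{L_0}) \isom H^n(\Xf, \Os_\Xf)$.

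For the second stage I would invoke Remark \ref{rem:LocalCohom} together with the standard local cohomology of $S$. Since $S$ is Cohen-Macaulay of dimension $\ell$, the modules $H^i_\mf(S)$ vanish for $i \neq \ell$, while $H^\ell_\mf(S)$ is the top local cohomology, computed by the \v{C}ech complex on the regular sequence $x_1, \ldots, x_\ell$. For $n = 0$, the depth condition $\ell \geq 2$ gives $H^0_\mf(S) = H^1_\mf(S) = 0$, so the canonical exact sequence $0 \to H^0_\mf(S) \to S \to H^0(\Xf, \Os_\Xf) \to H^1_\mf(S) \to 0$ collapses to $H^0(\Xf, \Os_\Xf) \isom S$. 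For $n \geq 1$, Remark \ref{rem:LocalCohom} gives $H^n(\Xf, \Os_\Xf) \isom H^{n+1}_\mf(S)$, from which the vanishing for $0 < n < \ell-1$ and the identification at $n = \ell-1$ follow immediately.

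No step of this plan presents a genuine obstacle: once Lemma \ref{lem:PosetLerayCohom} is available the first stage is essentially bookkeeping, and the second stage is a classical local cohomology calculation for a polynomial ring. The one point that warrants explicit verification is the affineness of each $U(X)$, which underpins the Leray property and is immediate from the observation $\mf \in V(Q(X))$.
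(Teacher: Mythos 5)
Your proposal matches the paper's proof: it applies Lemma \ref{lem:PosetLerayCohom} to the cover $\Uc_\Ac$ for the isomorphism $H^n(L_0,\Os_{L_0}) \isom H^n(\Xf,\Os_\Xf)$, and then invokes the standard local cohomology computation for the polynomial ring (which the paper delegates to \cite{Gro67_LocalCohom} and \cite{Hun07_LocalCohomLec}). Your additional verifications — that each $U(X)$ is genuinely affine because $Q(X) \in \mf$ so the puncture is absorbed, that $\Uc_\Ac$ actually covers $\Xf$, and that $X \mapsto U(X)$ is an order isomorphism — usefully unpack what the paper leaves implicit in Definition \ref{def:AffCoverA} and the surrounding discussion.
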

\begin{proof}
    The computation of the cohomology of $\Os_\Xf = \wt{S}|_\Xf$ is a classical result or exercise, 
    see \cite[Thm.~3.8]{Gro67_LocalCohom} and \cite[p.~9]{Hun07_LocalCohomLec}.
    
    The isomorphism $H^n(L_0,\Os_{L_0}) \isom H^n(\Xf,\Os_\Xf)$ follows with
    Lemma \ref{lem:PosetLerayCohom} applied to 
    the open cover $\Uc_\Ac$.  
\end{proof}


\section{Proof of Theorem \ref{thm:IsomCohom}}
\label{sec:Proofs}

Now, all preparations are complete and we put the pieces together to 
prove Theorem \ref{thm:IsomCohom}.

Let $\Ds$, $\wt{D}_{L_0}$ and $\Os_{L_0}$ be the sheaves of $S$-modules on $L_0$ defined in Section \ref{sec:LASheaves}. 
First, we note the following.

\begin{lemma}
    \label{lem:SheafDO}
    We have $\wt{D}_{L_0} = \Ds \otimes_S \Os_{L_0}$.
\end{lemma}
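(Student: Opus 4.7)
The plan is to verify the equality of sheaves by comparing their values on the basis of principal open subsets $\{U_X \mid X \in L_0\}$, since a sheaf on the Alexandroff space of $L_0$ is determined by its values on this basis together with the associated restriction maps. On a principal open $U_X$, the tensor product sheaf is defined by
\[
(\Ds \otimes_S \Os_{L_0})(U_X) = \Ds(U_X) \otimes_S \Os_{L_0}(U_X) = D(\Ac_X) \otimes_S S_{Q(X)} = D(\Ac_X)_{Q(X)},
\]
whereas $\wt{D}_{L_0}(U_X) = D_{Q(X)} = D(\Ac)_{Q(X)}$. So the entire content of the lemma reduces to the identification $D(\Ac_X)_{Q(X)} = D(\Ac)_{Q(X)}$, which is exactly Corollary \ref{coro:LocD}.

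The second task is to check that the restriction maps agree. For principal opens $U_Y \subseteq U_X$ (equivalently $X \subseteq Y$ in $L(\Ac)$, i.e.\ $X \leq Y$ in $L_0$), the restriction map on $\Ds \otimes_S \Os_{L_0}$ is the tensor product of the natural inclusion $D(\Ac_X) \hookrightarrow D(\Ac_Y)$ with the localization map $S_{Q(X)} \to S_{Q(Y)}$ (which makes sense because $Q(X)$ divides $Q(Y)$). Under the identifications above, this coincides with the localization of the inclusion $D(\Ac) \hookrightarrow D(\Ac)$ at $Q(X) \to Q(Y)$, which is precisely the restriction map for $\wt{D}_{L_0}$. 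Both assignments therefore define the same sheaf on the basis, and hence the same sheaf on $L_0$.

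I do not expect any genuine obstacle: the substantive algebraic input (Corollary \ref{coro:LocD}) is already in place, and the only thing to be careful about is to present the naturality of the restriction maps cleanly enough that no issues arise when one later wants to take sections, form \v{C}ech complexes, and apply the K\"unneth formula in Section \ref{sec:Proofs}.
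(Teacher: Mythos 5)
Your proof is correct and follows the same route as the paper: evaluate both sheaves on the basis of principal opens $U_X$, identify $D(\Ac_X)_{Q(X)}$ with $D(\Ac)_{Q(X)}$ via Corollary~\ref{coro:LocD}, and note that the restriction maps agree. The paper states this more tersely (``follows immediately from Corollary~\ref{coro:LocD} and the definition of the tensor product of sheaves on $L_0$''), but your spelled-out check of the restriction maps is a reasonable and compatible elaboration.
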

\begin{proof}
This follows immediately from Corollary \ref{coro:LocD} and the definition of the tensor product
of sheaves on $L_0$.
\end{proof}

\begin{proposition}
    \label{prop:ResTensorProd}
    Let $\Os_{L_0}$ and $\Ds$ be as before.
    Then
    \[
        \Cs^\bullet(\Uc_{L_0},\Ds) \otimes_S \Cs^\bullet(\Uc_{L_0},\Os_{L_0})
    \]
    is an acyclic resolution of $\Ds \otimes_S \Os_{L_0}$
    and for all $n\geq0$ we have
    \[
        H^n(L_0,\Ds \otimes \Os_{L_0}) \isom H^n(C^\bullet(\Uc_{L_0},\Ds) \otimes_S C^\bullet(\Uc_{L_0},\Os_{L_0}) ).
    \]   
\end{proposition}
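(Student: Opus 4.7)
The plan is to separate the claim into two parts: (a) that $\Tot(\Cs^\bullet(\Uc_{L_0},\Ds) \otimes_S \Cs^\bullet(\Uc_{L_0},\Os_{L_0}))$ is a resolution of $\Ds \otimes_S \Os_{L_0}$, and (b) that each term of this total complex is acyclic as a sheaf on $L_0$. Together these say the complex is an acyclic resolution, so sheaf cohomology is computed by the cohomology of the complex of global sections. Because the paper defines the tensor product of sheaves on a poset pointwise, $\Gamma(L_0,\Cs^p(\Uc_{L_0},\Ds) \otimes_S \Cs^q(\Uc_{L_0},\Os_{L_0})) = C^p(\Uc_{L_0},\Ds) \otimes_S C^q(\Uc_{L_0},\Os_{L_0})$ termwise, and since global sections commute with finite direct sums the resulting total complex is $C^\bullet(\Uc_{L_0},\Ds)\otimes_S C^\bullet(\Uc_{L_0},\Os_{L_0})$.

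For (a), I would exploit the flatness of $\Os_{L_0}$ and of every term of $\Cs^\bullet(\Uc_{L_0},\Os_{L_0})$ given by Remark \ref{rem:StrucSheafFlat}. For each fixed $q$, the column obtained by augmenting the bicomplex at $p=-1$ reads
\[
\Ds \otimes_S \Cs^q(\Uc_{L_0},\Os_{L_0}) \to \Cs^0(\Uc_{L_0},\Ds) \otimes_S \Cs^q(\Uc_{L_0},\Os_{L_0}) \to \cdots,
\]
i.e.\ the exact Čech resolution $\Ds \to \Cs^\bullet(\Uc_{L_0},\Ds)$ tensored with a flat sheaf, hence exact. Lemma \ref{lem:AcycAss} (acyclic assembly), or equivalently the row-filtration spectral sequence of the double complex, then forces the total complex to have cohomology concentrated in degree zero and equal to $\Ds \otimes_S \Os_{L_0}$.

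The main effort lies in (b). Each $\Tot^n$ decomposes as a finite direct sum of sheaves of the form $\Fs := i_*(\Ds|_{U_X}) \otimes_S j_*(\Os_{L_0}|_{U_Y})$, whose value at a principal open $U_Z$ is $D(\Ac_{X\vee Z}) \otimes_S S_{Q(Y\vee Z)}$. The main obstacle is to recognize $\Fs$ as a pushforward from a principal open: a naive guess is $U_{X\vee Y}$, but the mismatched localizations rule this out. Instead, invoke Lemma \ref{lem:LocD} to rewrite
\[
D(\Ac_{X\vee Z})_{Q(Y\vee Z)} = D(\Ac_{X\vee Y\vee Z})_{Q(Y\vee Z)},
\]
so that the value depends on $Z$ only through $W := Y\vee Z \in U_Y$. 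Hence $\Fs = k_*(\Fs_Y)$ where $k\colon U_Y \hookrightarrow L_0$ and $\Fs_Y(U_W) := D(\Ac_{X\vee W}) \otimes_S S_{Q(W)}$. Since $U_Y$ has the unique minimum $Y$, Lemma \ref{lem:PosetCohomMin} implies $\Fs_Y$ is acyclic on $U_Y$, and Lemma \ref{lem:OpenIncl} propagates this through the principal open inclusion $k$, giving the desired acyclicity of $\Fs$ on $L_0$. Combining (a), (b), and the pointwise tensor identity on posets yields the asserted isomorphism.
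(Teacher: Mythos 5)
Your proposal is correct and follows essentially the same route as the paper: flatness of $\Os_{L_0}$ and of the terms of its sheaf Čech complex plus Lemma \ref{lem:AcycAss} for the resolution part, decomposition of the terms into direct products of $i_*(\Ds|_{U_X}) \otimes_S j_*(\Os_{L_0}|_{U_Y})$, one application of Lemma \ref{lem:LocD}, and Lemma \ref{lem:OpenIncl} for acyclicity, with the pointwise definition of tensor product yielding the global-sections identity. The only cosmetic difference is that the paper applies Lemma \ref{lem:LocD} once more to rewrite $D(\Ac_{X\wedge Y\wedge Z})_{Q(Y\wedge Z)}$ as $D(\Ac_X)_{Q(Y\wedge Z)}$ and identifies the term as ${i_Y}_*\bigl(\wt{D(\Ac_X)}|_{U_Y}\bigr)$ rather than introducing the ad hoc sheaf $\Fs_Y$, but your $\Fs_Y$ is precisely $\wt{D(\Ac_X)}_{L_0}|_{U_Y}$, so the arguments coincide.
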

\begin{proof}
    Since $\Uc_{L_0}$ is a finite cover, both of the complexes $\Cs^\bullet(\Uc_{L_0},\Ds)$, $\Cs^\bullet(\Uc_{L_0},\Os_{L_0})$
    are bounded and so is the bicomplex of their tensor product. 
    By Remark \ref{rem:StrucSheafFlat} all terms of $\Cs^\bullet(\Uc_{L_0},\Os_{L_0})$
    are flat and the exactness of the complex $\Cs^\bullet(\Uc_{L_0},\Ds) \otimes_S \Cs^\bullet(\Uc_{L_0},\Os_{L_0})$ 
    follows with Lemma \ref{lem:AcycAss}.
    
    It remains to show, that all terms in the tensor product complex are acyclic sheaves.
    
    All terms of $\Cs^\bullet(\Uc_{L_0},\Ds) \otimes_S \Cs^\bullet(\Uc_{L_0},\Os_{L_0})$ are finite direct products
    of sheaves of the form
    \[
        \Cs_{X,Y} := {i_X}_*(\Ds|_{U_X}) \otimes_S {i_Y}_*(\Os_{L_0}|_{U_Y})
    \]
    for $X, Y \in L_0$ and inclusion maps $i_X:U_X \to L_0$, $i_Y:U_Y \to L_0$.
    For $Z \in L_0$ we have
    \begin{align*}
        \Cs_{X,Y}(U_Z) 
        &= {i_X}_*(\Ds|_{U_X})(U_Z) \otimes_S {i_Y}_*(\Os_{L_0}|_{U_Y})(U_Z) \\
        &= \Ds(U_{X\vee Z}) \otimes_S \Os_{L_0}(U_{Y\vee Z}) \\
        &= D(\Ac_{X\wedge_{L(\Ac)}Z})_{Q(Y\wedge_{L(\Ac)}Z)} \\
        &=D(\Ac_{X\wedge_{L(\Ac)}Y\wedge_{L(\Ac)}Z})_{Q(Y\wedge_{L(\Ac)}Z)} \\
        &=D(\Ac_X)_{Q(Y\wedge_{L(\Ac)}Z)},
    \end{align*}
    where the last two equalities hold thanks to Lemma \ref{lem:LocD}.
    Hence 
    \[
        \Cs_{X,Y} = {i_Y}_*(\wt{D(\Ac_X)}|_{U_Y})
    \]
    where $\wt{D(\Ac_X)}$ is the sheaf associated to the derivation module of the localization $\Ac_X$
    as in Definition \ref{def:StrucSheaf}.
    As a direct image sheaf of an inclusion of a principal open subset it is acyclic 
    by Lemma \ref{lem:OpenIncl} and so are all the terms of $\Cs^\bullet(\Uc_{L_0},\Ds) \otimes_S \Cs^\bullet(\Uc_{L_0},\Os_{L_0})$.
    Consequently, $\Cs^\bullet(\Uc_{L_0},\Ds) \otimes_S \Cs^\bullet(\Uc_{L_0},\Os_{L_0})$ is an acyclic resolution of $\Ds \otimes_S \Os_{L_0}$
    as desired.
    Finally, note that by the definition of the tensor product of sheaves on $L_0$ for the global sections we have
    \[
        \Gamma(L_0,\Cs^\bullet(\Uc_{L_0},\Ds) \otimes_S \Cs^\bullet(\Uc_{L_0},\Os_{L_0})) = C^\bullet(\Uc_{L_0},\Ds) \otimes_S C^\bullet(\Uc_{L_0},\Os_{L_0})
    \]
    and so the cohomology of the tensor product of the two \v{C}ech complexes computes the cohomology
    of $\Ds \otimes_S \Os_{L_0}$.
\end{proof}

It remains to verify the assumptions of the K\"unneth formula (Theorem \ref{thm:Kuenneth} resp.\ Corollary \ref{coro:Kuenneth})
for the \v{C}ech complex of $\Os_{L_0}$. 
This is done by the following proposition.

\begin{proposition}
    \label{prop:AssKuenneth}
    All terms in $C^\bullet(\Uc_{L_0},\Os_{L_0})$, all terms of its co\-boun\-dary-subcomplex and
    $H^p(C^\bullet(\Uc_{L_0},\Os_{L_0}) )$ ($p < \ell-1$) are flat $S$-modules.
\end{proposition}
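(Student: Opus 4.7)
The plan is to verify the three flatness claims in order of increasing subtlety. First, since $L_0$ is finite, the cover $\Uc_{L_0}$ is finite, so each term
\[
C^n(\Uc_{L_0},\Os_{L_0}) = \prod_{X_0 < \cdots < X_n} \Os_{L_0}(U_{X_0} \cap \cdots \cap U_{X_n}) = \prod_{X_0 < \cdots < X_n} S_{Q(X_0 \vee \cdots \vee X_n)}
\]
is a finite direct product of localizations of $S$, hence flat.

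Next, to handle the cohomology modules in degrees $p < \ell-1$, I invoke Proposition~\ref{prop:CohomStructSheaf}: the \v{C}ech cohomology $H^p(C^\bullet(\Uc_{L_0},\Os_{L_0}))$ computes $H^p(L_0,\Os_{L_0})$, which equals $S$ for $p=0$ and vanishes for $0 < p < \ell-1$. Both $S$ and the zero module are flat, so this range is immediate.

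The real work is the coboundary-subcomplex, which I plan to extract by appealing to Lemma~\ref{lem:AssumKuennethFlat}. The complex $C^\bullet(\Uc_{L_0},\Os_{L_0})$ is bounded (indexed by chains in the finite poset $L_0$) and has flat terms by the first step, so it remains only to verify
\[
\Tor^S_j\bigl(H^i(C^\bullet(\Uc_{L_0},\Os_{L_0})),\,M\bigr) = 0
\]
for every $S$-module $M$, every $i$, and every $j \geq 2$. By Proposition~\ref{prop:CohomStructSheaf} the only nonzero cohomology modules are $H^0 = S$ and $H^{\ell-1} = S_{x_1 \cdots x_\ell}/S$. The case of $S$ is trivial since $S$ is free. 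For $S_{x_1 \cdots x_\ell}/S$ I will use the short exact sequence
\[
0 \to S \to S_{x_1 \cdots x_\ell} \to S_{x_1 \cdots x_\ell}/S \to 0,
\]
in which both leftmost terms are flat, giving a flat resolution of length one; the associated long exact sequence for $\Tor^S_*(-,M)$ then kills $\Tor^S_j(S_{x_1\cdots x_\ell}/S,M)$ for all $j \geq 2$.

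The main obstacle, which the above maneuver circumvents, is that Lemma~\ref{lem:AssumKuennethFlat} insists on a Tor-vanishing statement for \emph{every} cohomology module—including the nontrivial top module $H^{\ell-1}$, which lies outside the range of flatness we are asked to prove. Once that single Tor computation is settled via the two-term flat resolution of $S_{x_1\cdots x_\ell}/S$, the lemma delivers flatness of every coboundary, completing the proof.
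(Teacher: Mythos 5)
Your proposal is correct and follows essentially the same route as the paper: identify the finitely many nonzero cohomology modules via Proposition~\ref{prop:CohomStructSheaf}, check that each satisfies the $\Tor$-vanishing hypothesis of Lemma~\ref{lem:AssumKuennethFlat} (the paper simply quotes the general fact that $\Tor^S_j(A/B,M)=0$ for $j\geq 2$ whenever $A,B$ are flat, which you re-derive via the length-one flat resolution $0\to S\to S_{x_1\cdots x_\ell}\to S_{x_1\cdots x_\ell}/S\to 0$), and then apply that lemma to get flatness of the coboundary-subcomplex. The remaining two claims (flatness of the \v{C}ech terms as finite products of localizations, and flatness of $H^p$ for $p<\ell-1$) are handled exactly as in the paper.
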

\begin{proof}
    First note that the complex $C^\bullet(\Uc_{L_0},\Os_{L_0})$ is bounded since $\Uc_{L_0}$ is a finite cover.
    By Remark \ref{rem:StrucSheafFlat} all $C^i(\Uc_{L_0},\Os_{L_0})$ are flat
    and note that since $\Uc_{L_0}$ is a Leray cover we have $H^p(L_0,\Os_{L_0}) \isom H^p(C^\bullet(\Uc_{L_0},\Os_{L_0}))$. 
    
    Recall that $\Tor^S_j(A/B,M) = 0$ for all $j\geq2$ and every $S$-module $M$
    provided $A$ and $B$ are both flat $S$-modules.
    So by Proposition \ref{prop:CohomStructSheaf} we have
    \[
        \Tor^S_j( H^p(C^\bullet(\Uc_{L_0},\Os_{L_0})),M) = 0
    \]
    for each $j\geq2$ and every $S$-module $M$.
    Consequently, the complex $C^\bullet(\Uc_{L_0},\Os_{L_0})$ satisfies the assumptions of 
    Lemma \ref{lem:AssumKuennethFlat} and so all terms of the coboundary-subcomplex
    of $C^\bullet(\Uc_{L_0},\Os_{L_0})$ are flat.
    
    Finally, once more by Proposition \ref{prop:CohomStructSheaf} the
    modules $H^p(C^\bullet(\Uc_{L_0},\Os_{L_0}))$ are flat $S$-modules for $p < \ell-1$.
\end{proof}

\begin{proposition}
    \label{prop:TensorProdDO}
    For $n \neq \ell-1$ we have 
    \[
        H^n(L_0,\wt{D}_{L_0}) \isom \bigoplus_{i+j=n} H^i(L_0,\Ds) \otimes_S H^j(L_0,\Os_{L_0})
    \]
    and for $n=\ell-1$ we have a short exact sequence
    \begin{center}
    	\begin{tikzcd}[column sep=5mm]
    		0 
    		\ar[r] & \bigoplus_{i+j={\ell-1}} H^i(L_0,\Ds) \otimes_S H^j(L_0,\Os_{L_0})
    		\ar[r] & H^{\ell-1}(L_0,\wt{D}_{L_0}) \\
    		\ar[r] & \Tor^S_1(H^1(L_0,\Ds),H^{\ell-1}(L_0,\Os_{L_0}))
    		\ar[r] & 0.
    	\end{tikzcd}
    \end{center}
\end{proposition}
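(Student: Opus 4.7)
The plan is to deduce Proposition \ref{prop:TensorProdDO} as an application of the K\"unneth Corollary \ref{coro:Kuenneth} to the tensor product of \v{C}ech complexes computing the cohomology of $\wt{D}_{L_0}$. Concretely, I would set $A^\bullet := C^\bullet(\Uc_{L_0},\Ds)$ and $C^\bullet := C^\bullet(\Uc_{L_0},\Os_{L_0})$, and take $k := \ell-1$. By Lemma \ref{lem:SheafDO} we have $\wt{D}_{L_0} = \Ds \otimes_S \Os_{L_0}$, and by Proposition \ref{prop:ResTensorProd} we know $H^n(L_0,\wt{D}_{L_0}) \isom H^n(A^\bullet \otimes_S C^\bullet)$. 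Meanwhile, since $\Uc_{L_0}$ is a Leray cover for both $\Ds$ and $\Os_{L_0}$ (Corollary \ref{coro:LerayPoset}), Theorem \ref{thm:LerayCechCohom} identifies $H^i(A^\bullet) = H^i(L_0,\Ds)$ and $H^j(C^\bullet) = H^j(L_0,\Os_{L_0})$, so once the hypotheses of Corollary \ref{coro:Kuenneth} are in place the conclusion is exactly the claimed decomposition.

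Next, I would check the four hypotheses of Corollary \ref{coro:Kuenneth} in turn. Flatness of all terms of $C^\bullet(\Uc_{L_0},\Os_{L_0})$, flatness of all terms of its coboundary-subcomplex, and flatness of $H^p(C^\bullet)$ for $p < \ell-1$ are precisely the content of Proposition \ref{prop:AssKuenneth}. For the top-degree vanishing $H^p(A^\bullet \otimes_S C^\bullet) = 0$ for $p > \ell-1$, I would use Proposition \ref{prop:ResTensorProd} to rewrite this as $H^p(L_0,\wt{D}_{L_0}) = 0$ and then invoke Grothendieck's vanishing theorem (Theorem \ref{thm:CohomPosetMax}): the poset $L_0$ consists of flats of codimension $0,1,\ldots,\ell-1$ ordered by inclusion, so maximal chains have length at most $\ell-1$ and $\dim(L_0) \leq \ell-1$.

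The most delicate hypothesis to verify is the vanishing of $\Tor^S_1(H^0(A^\bullet), H^{\ell-1}(C^\bullet))$. For this I would first identify $H^0(L_0,\Ds) = \Ds(L_0)$ with $D(\Ac)$: since the global sections of a sheaf on an Alexandroff space are the inverse limit over the poset, and $\Ac = \bigcup_{X \in L_0} \Ac_X$, one has $\Ds(L_0) = \bigcap_{X \in L_0} D(\Ac_X) = D(\Ac)$. On the other side, Proposition \ref{prop:CohomStructSheaf} identifies $H^{\ell-1}(L_0,\Os_{L_0}) = S_{x_1 x_2 \cdots x_\ell}/S$, which is a quotient of the localization by $S$. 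Since $D(\Ac)$ is reflexive by Saito's theorem and hence torsion-free, Lemma \ref{lem:Tor1TorFree} applied to $p = x_1 x_2 \cdots x_\ell$ gives the required $\Tor^S_1(D(\Ac), S_p/S) = 0$.

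With all hypotheses verified, Corollary \ref{coro:Kuenneth} delivers both the isomorphism for $n \neq \ell-1$ and the short exact sequence for $n = \ell-1$ in the stated form, completing the proof. The main obstacle I anticipate is precisely the $\Tor^S_1$-vanishing step; the other hypotheses are essentially packaged by the preceding propositions, but the top-degree torsion check requires invoking both the reflexivity of $D(\Ac)$ and the explicit form of $H^{\ell-1}(L_0,\Os_{L_0})$.
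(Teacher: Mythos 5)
Your proposal is correct and follows the same route as the paper's own proof: identify $\wt{D}_{L_0} = \Ds \otimes_S \Os_{L_0}$ via Lemma~\ref{lem:SheafDO}, pass to \v Cech complexes via Proposition~\ref{prop:ResTensorProd}, discharge the flatness hypotheses via Proposition~\ref{prop:AssKuenneth}, handle the top-degree vanishing by Theorem~\ref{thm:CohomPosetMax} with $\dim L_0 = \ell-1$, and settle the $\Tor^S_1$ vanishing by identifying $H^0(L_0,\Ds)\isom D(\Ac)$, invoking reflexivity and Lemma~\ref{lem:Tor1TorFree}, then applying Corollary~\ref{coro:Kuenneth}. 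The only cosmetic difference is that you spell out the inverse-limit identification $\Ds(L_0)=\bigcap_{X\in L_0}D(\Ac_X)=D(\Ac)$, which the paper simply asserts.
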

\begin{proof}
    By Lemma \ref{lem:SheafDO}, we readily get $H^n(L_0,\wt{D}_{L_0}) \isom H^n(L_0,\Ds \otimes_S \Os_{L_0})$.
    Owing to Proposition \ref{prop:ResTensorProd} we have 
    \[
    	H^n(L_0,\Ds \otimes_S \Os_{L_0}) \isom  H^n(C^\bullet(\Uc_{L_0},\Ds) \otimes_S C^\bullet(\Uc_{L_0},\Os_{L_0}) ).
    \]
    By Proposition \ref{prop:AssKuenneth} the coboundary-subcomplex of $C^\bullet(\Uc_{L_0},\Os_{L_0})$ is flat
    and by Proposition \ref{prop:CohomStructSheaf} $H^p(C^\bullet(\Uc_{L_0},\Os_{L_0}))$ is flat for all $p< \ell-1$.
    Further, as the dimension of $L_0$ is $\ell-1$, by Theorem \ref{thm:CohomPosetMax} we have 
    \[
    	H^p(C^\bullet(\Uc_{L_0},\Ds) \otimes_S C^\bullet(\Uc_{L_0},\Os_{L_0}) ) = H^p(L_0,\Ds \otimes_S \Os_{L_0}) = 0
    \]
    for $p > \ell-1$.
    Moreover, $H^0(C^\bullet(\Uc_{L_0},\Ds)) \isom H^0(L_0,\Ds) \isom D(\Ac)$ 
    is a reflexive $S$-module (cf.\ \cite[p.~268]{Saito80_LogForms}) 
    and as such it is torsion free in particular. 
    Since $H^{\ell-1}(C^\bullet(\Uc_{L_0},\Os_{L_0})) \isom H^{\ell-1}(L_0,\Os_{L_0}) \isom S_{x_1\cdots x_\ell}/S$ 
    by Proposition \ref{prop:CohomStructSheaf}
    we have 
    \[
    	\Tor^S_1(H^0(C^\bullet(\Uc_{L_0},\Ds)), H^{\ell-1}(C^\bullet(\Uc_{L_0},\Os_{L_0}))) = 0
    \]
    by Lemma \ref{lem:Tor1TorFree}.
   	Hence, we can applying  Corollary \ref{coro:Kuenneth} to the two complexes $C^\bullet(\Uc_{L_0},\Ds)$ and $C^\bullet(\Uc_{L_0},\Os_{L_0})$
   	which concludes the proof.
\end{proof}

By Lemma \ref{lem:PosetLerayCohom} we have isomorphisms 
$H^n(L_0,\wt{D}_{L_0}) \isom H^n(\Xf,\wt{D}|_\Xf)$ and
$H^n(L_0,\Os_{L_0}) \isom H^n(\Xf,\Os_{\Xf})$ for all $n\geq 0$.
Recall that we have $H^0(\Xf,\Os_\Xf) \isom S$ by Proposition \ref{prop:CohomStructSheaf}.
This yields our main theorem.

\begin{corollary}[{Theorem \ref{thm:IsomCohom}}]
	For all $n\neq \ell-1$ we have:
	\[
	H^n(\Xf,\wt{D}|_\Xf) \isom \bigoplus_{i+j=n} H^i(L_0,\Ds) \otimes_S H^j(\Xf,\Os_\Xf)
	\]
	and for $n=\ell-1$ we have a short exact sequence
	\begin{center}
		\begin{tikzcd}[column sep=5mm]
			0 
			\ar[r] & \bigoplus_{i+j={\ell-1}} H^i(L_0,\Ds) \otimes_S H^j(\Xf,\Os_\Xf)
			\ar[r] & H^{\ell-1}(\Xf,\wt{D}|_\Xf) \\
			\ar[r] & \Tor^S_1(H^1(L_0,\Ds),H^{\ell-1}(\Xf,\Os_\Xf))
			\ar[r] & 0.
		\end{tikzcd}
	\end{center}
	
	In particular, $H^n(\Xf,\wt{D}|_\Xf) \isom H^n(L_0,\Ds)$ for $n < \ell-1$.
\end{corollary}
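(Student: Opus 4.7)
The plan is to reduce the statement on $\Xf$ to an analogous statement on the poset $L_0$ and then apply the K\"unneth formula to a tensor product of \v{C}ech complexes. Since the affine cover $\Uc_\Ac$ is Leray for every coherent sheaf on $\Xf$ and is closed under intersections, Lemma \ref{lem:PosetLerayCohom} yields isomorphisms $H^n(\Xf,\wt{D}|_\Xf) \isom H^n(L_0,\wt{D}_{L_0})$ and $H^n(\Xf,\Os_\Xf) \isom H^n(L_0,\Os_{L_0})$ for every $n$. So it will suffice to prove the K\"unneth-type formula entirely on $L_0$ and transport the result back to $\Xf$.

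On $L_0$ I would first invoke Lemma \ref{lem:SheafDO} (a direct consequence of Corollary \ref{coro:LocD}) to identify $\wt{D}_{L_0}$ with $\Ds \otimes_S \Os_{L_0}$, reducing the computation to the cohomology of a tensor product of sheaves on $L_0$. Next, by Proposition \ref{prop:ResTensorProd}, the termwise tensor product of \v{C}ech resolutions $\Cs^\bullet(\Uc_{L_0},\Ds) \otimes_S \Cs^\bullet(\Uc_{L_0},\Os_{L_0})$ is an acyclic resolution of $\Ds \otimes_S \Os_{L_0}$, so its complex of global sections $C^\bullet(\Uc_{L_0},\Ds) \otimes_S C^\bullet(\Uc_{L_0},\Os_{L_0})$ computes $H^\bullet(L_0, \Ds \otimes_S \Os_{L_0})$.

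The key step is then to apply the K\"unneth formula (Corollary \ref{coro:Kuenneth}) to these two \v{C}ech complexes. The flatness hypotheses are supplied by Proposition \ref{prop:AssKuenneth}: both $C^\bullet(\Uc_{L_0},\Os_{L_0})$ and its coboundary-subcomplex are termwise flat, and $H^p(L_0,\Os_{L_0})$ is flat for $p < \ell-1$. The vanishing hypothesis in degrees $p > \ell-1$ follows from Grothendieck's vanishing (Theorem \ref{thm:CohomPosetMax}) together with the fact that $\dim L_0 = \ell-1$.

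The main obstacle will be verifying the remaining $\Tor^S_1$ vanishing hypothesis of Corollary \ref{coro:Kuenneth} at the critical degree. By Proposition \ref{prop:CohomStructSheaf} we have $H^{\ell-1}(L_0,\Os_{L_0}) \isom S_{x_1\cdots x_\ell}/S$, while $H^0(L_0,\Ds) \isom D(\Ac)$ is reflexive by \cite{Saito80_LogForms} and hence torsion free. Lemma \ref{lem:Tor1TorFree} then delivers $\Tor^S_1(H^0(L_0,\Ds), H^{\ell-1}(L_0,\Os_{L_0})) = 0$, which is exactly the input needed. Feeding everything into the K\"unneth sequence of Corollary \ref{coro:Kuenneth} yields the claimed isomorphism for $n \neq \ell-1$ and the short exact sequence for $n = \ell-1$ on the poset side; translating back via the Leray isomorphisms of Lemma \ref{lem:PosetLerayCohom} produces the form of the theorem stated on $\Xf$.
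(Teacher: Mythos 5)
Your proposal is correct and follows essentially the same route as the paper: reduce to $L_0$ via Lemma \ref{lem:PosetLerayCohom}, identify $\wt{D}_{L_0}$ with $\Ds \otimes_S \Os_{L_0}$ by Lemma \ref{lem:SheafDO}, compute via the tensor product of \v{C}ech complexes using Proposition \ref{prop:ResTensorProd}, and apply Corollary \ref{coro:Kuenneth} with the flatness inputs from Proposition \ref{prop:AssKuenneth}, the degree bound from Theorem \ref{thm:CohomPosetMax}, and the $\Tor^S_1$ vanishing from reflexivity of $D(\Ac)$ and Lemma \ref{lem:Tor1TorFree}. The only cosmetic difference is that the paper organizes the $L_0$-side computation as a standalone Proposition \ref{prop:TensorProdDO} before transporting to $\Xf$, whereas you frame the transport as bookends; the substance is identical.
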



\section{Concluding remarks}
	\label{sec:ConclRmks}

In view of Corollary \ref{cor:CharFreeness}, we may reformulate Terao's conjecture as follows:

\begin{conjecture}
	The vanishing of the lattice sheaf cohomology groups $H^n(L_0,\Ds)$ for $0<n<\ell-1$
	does only depend on the poset $L_0$.		
\end{conjecture}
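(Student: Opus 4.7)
Since Corollary~\ref{cor:CharFreeness} shows this conjecture is equivalent to Terao's conjecture, any viable approach must confront a celebrated open problem. My plan is to construct an auxiliary sheaf $\Ds^{\mathrm{comb}}$ on $L_0$ depending only on the abstract poset $L_0$ (equivalently, on the underlying matroid), together with a natural comparison map to (or from) $\Ds$ inducing isomorphisms on $H^n(L_0,-)$ for $0 < n < \ell - 1$. If such a sheaf exists, then the vanishing of $H^n(L_0,\Ds)$ in that range reduces to a manifestly combinatorial question about $\Ds^{\mathrm{comb}}$.

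The first concrete step would examine the \v{C}ech complex $C^\bullet(\Uc_{L_0},\Ds)$, whose terms are products of modules of the form $D(\Ac_{X_0})$ indexed by strict chains $X_0 \subsetneq \cdots \subsetneq X_n$ in $L_0$. By the Solomon--Terao theorem \cite{SolTer81_FormulaCharPoly}, the Hilbert series of each $D(\Ac_X)$ is a combinatorial invariant of $L_0$, so the graded Euler characteristic $\sum_n (-1)^n \dim_\KK H^n(L_0,\Ds)_d$ is combinatorial in each degree $d$. The real task is to promote this to a statement about individual cohomology groups --- for instance by producing a spectral sequence whose $E_2$-page is built from combinatorial data attached to $L_0$ (Whitney homology, the Orlik--Solomon algebra, or the Stanley--Reisner ring of the order complex of $L_0$) and which converges to $H^\bullet(L_0,\Ds)$.

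A complementary local attack would proceed by induction on the rank of elements of $L_0$. Using Lemma~\ref{lem:LocD}, the restriction of $\Ds$ to $U_X \setminus \{X\}$ is determined by the data of $D(\Ac_Y)$ for $X \subsetneq Y$, each of which belongs to a strictly smaller localization sublattice. A Mayer--Vietoris analysis over principal open covers should then express $H^n(L_0,\Ds)$ as an iterated assembly of lower-rank cohomology with glueing maps encoded purely by the incidence relations in $L_0$; one would attempt to show by induction that the \emph{vanishing} of $H^n$ in the middle range propagates through these assembly steps in a $L_0$-controlled way.

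The main obstacle is unavoidably circular: since $H^0(L_0,\Ds) \cong D(\Ac)$ is the very module whose combinatorial nature is the content of Terao's conjecture, its internal structure cannot be used as an input. A proof must therefore exploit the fact that strictly positive \v{C}ech degrees only involve \emph{localizations} of derivation modules along chains in $L_0$, and isolating a combinatorial description of those localizations and the glueing data among them --- without resolving Terao's conjecture for each proper localization $\Ac_X$ along the way --- is where I expect any serious attempt will either succeed or uncover genuinely new combinatorial obstructions.
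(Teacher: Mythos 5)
The statement you are addressing is a \emph{conjecture}, not a theorem: the paper offers no proof of it, and by Corollary \ref{cor:CharFreeness} it is exactly equivalent to Terao's freeness conjecture, which is open. You recognize this at the outset, and your text is accordingly a survey of possible lines of attack rather than a proof. Judged as such, it is a sensible discussion, and your final paragraph correctly isolates the real difficulty: the positive-degree \v{C}ech terms involve only the modules $D(\Ac_Z)$ for proper localizations together with glueing data from $L_0$, and making that data combinatorial without already resolving Terao's conjecture for each $\Ac_Z$ is the whole problem. There is nothing in the paper to compare your argument against, because no argument exists.

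One concrete error is worth flagging, since it undermines even the modest first step of your first approach. The Solomon--Terao formula \cite{SolTer81_FormulaCharPoly} does \emph{not} say that the Hilbert series of $D(\Ac_X)$ is a combinatorial invariant of $L_0$. It expresses the characteristic polynomial $\chi(\Ac,t)$ as a limit of an expression built from the Hilbert series of the modules $D^p(\Ac)$; the implication runs from the modules to the combinatorics, not conversely. Whether the Hilbert series of $D(\Ac)$ is combinatorially determined is itself an open problem (strictly harder, in the non-free case, than anything Terao's conjecture would give). Consequently your claim that the graded Euler characteristic $\sum_n(-1)^n\dim_\KK H^n(L_0,\Ds)_d$ is combinatorial is unjustified: it equals the alternating sum of the graded dimensions of the \v{C}ech terms, which are the various $D(\Ac_Z)_d$, none of which is known to be combinatorial --- indeed the $n=0$ contribution already contains $\dim_\KK D(\Ac)_d$ itself. (A minor indexing slip: for a chain $X_0\subsetneq\cdots\subsetneq X_n$ in $L_0$ the corresponding \v{C}ech term is $D(\Ac_{X_n})$, the localization at the \emph{largest} subspace in the chain, not $D(\Ac_{X_0})$.) The remaining proposals (a spectral sequence with combinatorial $E_2$-page, a Mayer--Vietoris induction over $L_0$) are legitimate research directions but are only named, not carried out, so no step of them can be checked.
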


In his recent work \cite{Abe2020_pdD}, Abe studies the behavior of the projective dimension
under addition-deletion operations. 
He also states the problem \cite[Prob.~6.3(3)]{Abe2020_pdD}
whether indeed even the projective dimension of the derivation module is combinatorial.
This generalization of Terao's conjecture can be reformulated with Theorem \ref{thm:pdD} as follows.

\begin{problem}
	Does the vanishing of the lattice sheaf cohomology groups $H^n(L_0,\Ds)$ (for arbitrary $n$)
	only depend on the combinatorics of the arrangement, i.e.\ on the poset $L_0$?	
\end{problem}

Note that Lemma \ref{lem:LocD} easily generalizes to the case of multi-\-arran\-ge\-ments, i.e.
arrangements equipped with a multiplicity function $\mathbf{m}:\Ac \to \ZZ _{\geq 0}$ which were
introduced by Ziegler \cite{Ziegler1989_MultiArr}.
Moreover, the other essential properties of $D(\Ac)$ used in the proof of Theorem \ref{thm:IsomCohom} 
hold more generally for the module of multi-derivations $D(\Ac,\mathbf{m})$.
Hence, all the main results of Section \ref{sec:Intro} extend to multi-arrangements 
essentially with the same proofs.

\bigskip

In view of the important results about freeness preserved under various addition, deletion and 
restriction operations, cf.\ \cite{Terao1980_FreeI}, \cite{Ziegler1989_MultiArr}, \cite{Yos04_CharaktFree}, \cite{Abe16_DivFree},
thinking of the long exact sequence in cohomology obtained from a short exact sequence of sheaves,
we note the following natural problem.

\begin{problem}
	Are there short exact sequences relating the sheaves $\Ds, \Ds', \Ds''$, $(\Ds^H,\mathbf{m}^H)$
	where $\Ds', \Ds'', (\Ds^H,\mathbf{m}^H)$ are the sheaves of a deletion, restriction or Ziegler-restriction
	of the given arrangement, respectively?	
\end{problem}

Finally, we suspect that Yuzvinsky's celebrated theorem \cite[Thm.~3.3]{Yuz93_FreeLocFreeL}
stating that the subset formed by free arrangements in the moduli space of all arrangements with a given intersection
lattice is Zariski open can be generalized with our results:
arrangements with projective dimesion greater or equal to $p$ constitute a Zariski closed
subset in the moduli space of all arrangements with a given intersection lattice, or
equivalently, arrangements with projective dimension less or equal to $p$ form a Zariski open
subset.


\section*{Acknowledgments}

The author is grateful to Wassilij Gnedin for several helpful discussions and 
for suggesting Lemma \ref{lem:AssumKuennethFlat} to verify in our situation
the assumptions for Theorem \ref{thm:Kuenneth} respectively Corollary \ref{coro:Kuenneth}.
He  would also like to thank Gerhard R\"ohrle
for helpful comments on an earlier draft of the manuscript.



\providecommand{\bysame}{\leavevmode\hbox to3em{\hrulefill}\thinspace}
\providecommand{\MR}{\relax\ifhmode\unskip\space\fi MR }
\providecommand{\MRhref}[2]{%
	\href{http://www.ams.org/mathscinet-getitem?mr=#1}{#2}
}
\providecommand{\href}[2]{#2}

\end{document}